\renewcommand{\ALG@name}{Procedure}
\newcommand*\patchAmsMathEnvironmentForLineno[1]{%
 \expandafter\let\csname old#1\expandafter\endcsname\csname #1\endcsname
 \expandafter\let\csname oldend#1\expandafter\endcsname\csname end#1\endcsname
 \renewenvironment{#1}%
    {\linenomath\csname old#1\endcsname}%
    {\csname oldend#1\endcsname\endlinenomath}}%
\newcommand*\patchBothAmsMathEnvironmentsForLineno[1]{%
 \patchAmsMathEnvironmentForLineno{#1}%
 \patchAmsMathEnvironmentForLineno{#1*}}%
\newcommand{\vol}[1]{\mathsf{Volt}(}
\definecolor{brightmaroon}{rgb}{0.76, 0.13, 0.28}
\definecolor{linkblue}{rgb}{0, 0.337, 0.227}
\newcommand{\cay}{\mathop{\mathsf{Cay}}}
\newcommand{\cartprod}{\mathbin{\Box}}
\newcommand{\st}{\mathsf{St}}
\newcommand{\g}{\overline{G}}
\newcommand{\s}{\overline{S}}
\newcommand{\x}{\overline{x}}
\newcommand{\y}{\overline{y}}
\newcommand{\pp}{P_{\infty} \mathbin{\Box} P_{\infty}}
\newcommand{\np}{P_{n} \mathbin{\Box} P_{\infty}}
\newcommand{\crefdefpart}[2]{%
  \hyperref[#2]{\namecref{#1}~\labelcref*{#1}~\ref*{#2}}%
}
\newrobustcmd{\onesub}{\mathord{\includegraphics{figs/one-sub}}}
\newrobustcmd{\leftup}{\mathord{\includegraphics{figs/left-up}}}
\title{{Hamiltonicity of Transitive Graphs Whose Automorphism Group Has $\Z_{p}$ as Commutator Subgroups}}%\thanks{This research was partly funded by NSERC.}}
\author{Florian Lehner\thanks{Department of Mathematics, University of Auckland, 38 princes street, 1010, Auckland, New Zealand} \,\,\,\,\,Farzad Maghsoudi\thanks{Department of Mathematics and Computer Science, University of Lethbridge, Lethbridge, AB, Canada.} \,\,\,\,\,Babak Miraftab\thanks{School of Computer Science, Carleton University, Ottawa, ON, Canada.} }
\date{}
\begin{document}

\maketitle

\begin{abstract}
In 1982, Durnberger proved that every connected Cayley graph of a finite group with a commutator subgroup of prime order contains a hamiltonian cycle. In this paper, we extend this result to the infinite case. 
%Specifically, we prove the existence of a two-way hamiltonian path in every connected Cayley graph of an infinite group $G$ with a cyclic commutator subgroup of prime order $p$. 
Additionally, we generalize this result to a broader class of infinite graphs $X$, where the automorphism group of $X$ contains a transitive subgroup $G$ with a cyclic commutator subgroup of prime order.
\end{abstract}
%\tableofcontents

\section{Introduction}

A \defin{hamiltonian path (cycle)} in a finite graph is a path (cycle) that traverses every vertex of the graph exactly once.
A graph $X$ is called \defin{vertex-transitive} if its automorphism group acts transitively on its vertices, in other words, for any two vertices $v_1$ and $v_2$ in $V(X)$, there exists an automorphism $f\colon V(X)\to V(X)$ such that $f(v_{1})=v_{2}$.
Lovász in 1969 conjectured that every finite connected vertex transitive graph has a hamiltonian path~\cite[p 497]{MR0263646}, but even decades later this conjecture remains largely unresolved. A particularly significant class of transitive graphs is the class of Cayley graphs. 
Lovász' conjecture has been verified for certain classes of Cayley graphs, such as Abelian groups, $p$-groups \cite{p-group}, and groups whose order has few prime factors \cite{pqrs}, see the surveys \cite{WitteGallian-survey, Lanel} for more results.

\noindent Hamiltonicity problems have also received significant attention in the context of infinite graphs, as evidenced by recent studies \cite{MR3799423,miraftab2022hamiltonicity,miraftab2023vertex,erdelehner,darijani2022arc}. It is worth noting that there are several appropriate generalizations of hamiltonian cycles for infinite graphs.
In this paper, our focus is on two-way hamiltonian paths.
\begin{defn}[cf.\ {\cite[pp.~286 and 297]{NashWilliams-Infinite}}] \label{2wayHPDefn}
Let $X$ be a countably infinite graph.
A \defin{two-way hamiltonian path} is a two-way infinite list $\ldots, x_{-2}, x_{-1}, x_{0}, x_{1}, x_{2}, \ldots$
containing each vertex of $X$ exactly once, such that $x_i$ is adjacent to $x_{i+1}$ for each $i$.
\end{defn}

\noindent Many results about hamiltonian paths in finite graphs carry over to the infinite setting. For instance, the following result is well-known.

\begin{lem}\label{abelian_ham}
    Let $G$ be a finitely generated abelian group.
    Then the following holds:
    \begin{enumerate}[\rm (i)]
        \item {\rm\cite[Corollary 3.2]{MR0708161}} If $G$ is finite, then every Cayley graph of $G$ has a hamiltonian cycle.
        \item{\rm\cite[Theorem 1]{MR0105438}} If $G$ is infinite, then every Cayley graph of $G$ has a two-way hamiltonian path. 
    \end{enumerate}
\end{lem}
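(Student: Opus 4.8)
The plan is to prove both parts by a single induction that lifts a hamiltonian structure through a cyclic quotient; the two parts then differ only in which obstacle survives. By the structure theorem a finitely generated abelian group is a finite product of cyclic groups, but since the given generating set $S$ need not respect this decomposition, the argument must accommodate an arbitrary connected $S$. First I would fix a generator $s \in S$ and put $H = \langle s \rangle$, so that $\g = G/H$ is again finitely generated abelian but simpler, and the image $\s$ of $S$ generates it; hence $\cay(\g,\s)$ is connected and, by the inductive hypothesis, carries a hamiltonian cycle if $\g$ is finite and a two-way hamiltonian path if $\g$ is infinite. The base cases are the cyclic groups: in $\Z_n$ any connected $S$ yields a hamiltonian cycle by a direct check, and in $\Z$ the $s$-edges already give the two-way path.

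The lifting step uses that each coset $gH$ inherits from the $s$-edges a cycle of length $|H|$ when $H$ is finite, or a bi-infinite path when $H \cong \Z$, while the hamiltonian structure of $\g$ dictates the order in which the cosets are visited and which generator $t_i \in S$ crosses from one to the next. When $H$ is finite the construction is a boustrophedon: enter a coset, run through all of its vertices along $\pm s$ as a hamiltonian path of that (finite) coset, cross to the next coset by $t_i$, and reverse the direction of travel from coset to coset so that exit and entry vertices line up. This already finishes the infinite case whenever $H$ can be taken finite and $\g$ infinite, since one then simply concatenates the per-coset paths along the bi-infinite order $\ldots, C_{-1}, C_0, C_1, \ldots$ with no need to close anything.

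The hard part for part (i) is precisely the closing-up. Tracking endpoints shows the walk returns to the initial coset shifted by $\sum_i t_i = \lambda s \in H$, and the cycle closes only when $\lambda$ meets a parity condition modulo $|H|$ determined by the number of cosets; a careless choice of $s$ makes it fail. I would repair this as in the finite abelian theory: choose $s$ of largest order and absorb the discrepancy by a single local re-route (a ``hook'') in one coset that flips the direction pattern, or exploit the freedom to reverse the quotient cycle and to choose which $s$-edge is omitted in each coset, forcing the accumulated offset to $0$. This offset-and-parity bookkeeping is the only genuinely delicate point of part (i).

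The remaining difficulty in part (ii) appears when $G$ has free rank at least two (e.g.\ $G = \Z^2$ with the standard generators), for then $S$ may contain no element of finite order and every cyclic $H = \langle s \rangle$ has infinite order and infinite index, so that both $H$ and $\g$ are infinite. Here the naive concatenation breaks down, because a single coset is already bi-infinite and cannot be ``finished'' before moving on; instead I would interleave the cosets by a shell-by-shell (spiral) covering, grouping finitely many vertices at a time so that the union is exhausted by a single bi-infinite path. The compensating simplification is that, being a two-way path rather than a cycle, it carries no closing constraint, so the whole issue reduces to the geometric bookkeeping of the spiral together with the endpoint-matching across consecutive shells; the base case here is $G = \Z$ itself.
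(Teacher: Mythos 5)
First, note that the paper itself does not prove this lemma: both parts are imported verbatim from the literature (part (i) is Maru\v{s}i\v{c}'s result, part (ii) is Nash--Williams'), so there is no in-paper argument to compare yours against; your sketch has to stand on its own. Its overall strategy --- induct through a cyclic quotient $\g = G/\langle s\rangle$ and lift coset by coset --- is indeed the classical route and is consistent with the lifting machinery the paper builds later (\Cref{|N|-liftings} and the grid constructions of \Cref{goodgrids}). But the two places you yourself flag as ``delicate'' are exactly where the entire content of the cited theorems lives, and neither is actually carried out. For (i), the closing-up obstruction is real and is not dispatched by ``a single local re-route'': already for $\Z_3\times\Z_3$ with the standard generators the boustrophedon returns to the starting coset displaced by $(1,1)$, which is not a generator, and the known repair (Maru\v{s}i\v{c}, Chen--Quimpo) requires a structurally different cycle pattern, not a hook in one coset. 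Asserting that choosing $s$ of maximal order and ``exploiting the freedom to reverse the quotient cycle'' forces the offset to $0$ is precisely the theorem to be proved, so as written part (i) is a restatement of the difficulty rather than a proof.

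Part (ii) has two concrete problems. A spiral emanating from a base point is one-ended: it produces a one-way hamiltonian ray, which is \emph{not} a two-way hamiltonian path in the sense of \Cref{2wayHPDefn} (every vertex in a two-way path has both a predecessor and a successor). The correct construction splits $\cay(G;S)$ into two half-grids meeting along an edge and snakes each half separately, as in the paper's Figure~\ref{aligned+volt}(a); you should say this rather than ``spiral.'' Second, your case split (``$H$ finite and $\g$ infinite'' versus ``both infinite, free rank $\geq 2$'') misses the case where every generator has infinite order but the free rank is $1$, e.g.\ $G=\Z$ with $S=\{\pm 2,\pm 3\}$: there $H=\langle s\rangle$ is infinite of finite index, the quotient is finite, and neither the concatenation nor the spiral as described applies --- this is the $P_n \cartprod P_\infty$ situation and needs the half-grid snake as well. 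These gaps are all fixable, but until they are filled the proposal is an outline of the standard approach rather than a proof, and for a result of this vintage the honest move is the one the paper makes: cite it.
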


Recall that the commutator subgroup $G'$ of a group $G$ is the subgroup generated by all commutators $ \langle [x,y]=xyx^{-1}y^{-1} \mid x, y \in G\rangle$, so $G$ is abelian if and only if the commutator subgroup of $G$ is trivial. Since Lovász' conjecture is known to be true for Cayley graphs of abelian groups, it seems natural to explore scenarios where the order of the commutator subgroup of $G$ is small.
In 1982, Durnberger proved the following:
\begin{thm}{\rm\cite[Theorem 1]{primeG'}}\label{finitezp}
    There is a hamiltonian cycle in every connected Cayley graph of a finite group whose commutator subgroup has prime order.
\end{thm}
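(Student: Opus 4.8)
The plan is to pass to the abelian quotient $\bar G := G/G'$ and then lift a carefully chosen Hamiltonian cycle across a $\Z_p$-cover. Set $N := G'$, so that $N \cong \Z_p$ is normal and $\bar G$ is abelian. If $N$ is trivial then $G$ is abelian and a Hamiltonian cycle exists by Lemma~\ref{abelian_ham}(i), so assume $N \neq \{e\}$. Fix a connected Cayley graph $\cay(G,S)$, let $\pi\colon G \to \bar G$ be the projection, and put $\bar S := \pi(S)$. Then $\cay(\bar G,\bar S)$ is a connected Cayley graph of the finite abelian group $\bar G$, and $\pi$ exhibits $\cay(G,S)$ as a regular $\Z_p$-cover of $\cay(\bar G,\bar S)$, with $N$ as deck group.

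The central tool is a voltage (``factor group'') argument. Given a Hamiltonian cycle of $\cay(\bar G,\bar S)$, write it as a sequence of generators $\bar s_1,\ldots,\bar s_n$ with $n=|\bar G|$ and $\bar s_1\cdots \bar s_n=\bar e$, choose lifts $s_i\in S$, and call $v:=s_1\cdots s_n\in N$ its \emph{voltage}. Lifting the cycle from the identity and repeating the same generator word, the lift returns to its start after exactly $\mathrm{ord}(v)$ passes; hence it closes into a single cycle meeting every vertex of $\cay(G,S)$ precisely when $v$ has order $p$. Since $|N|=p$ is prime, this is equivalent to $v\neq e$. The theorem therefore reduces to the claim that $\cay(\bar G,\bar S)$ possesses a Hamiltonian cycle of nonzero voltage.

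To establish the claim, start from any Hamiltonian cycle $\bar C$ of $\cay(\bar G,\bar S)$, which exists by Lemma~\ref{abelian_ham}(i). If its voltage is nonzero we are done, so suppose it is $e$; then $\bar C$ lifts to $p$ vertex-disjoint cycles $C_0,\ldots,C_{p-1}$ that partition the vertex set $G$ and are cyclically permuted by the deck group $N$. Because $\cay(G,S)$ is connected while the $C_i$ are disjoint and cover all vertices, some edge of $\cay(G,S)$ must join two distinct lifts, say $C_i$ and $C_{i+d}$ with $d\neq 0$; applying the deck translations yields a whole family of such ``cross-over'' edges of the same shift $d$, and since $p$ is prime $\gcd(d,p)=1$, so these edges link the copies $C_0,C_d,C_{2d},\ldots$ in a single cyclic order through all $p$ of them. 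The nontriviality of these cross-overs is exactly the manifestation of $G$ being nonabelian: a commutator square $g\to gs\to gst$ versus $g\to gt\to gts$ with $[s,t]\neq e$ closes in $\bar G$ but has endpoints differing by the nonzero element $g[t,s]g^{-1}\in N$. Using the cross-over edges to perform two-edge exchanges that successively splice neighbouring copies, one merges $C_0,\ldots,C_{p-1}$ into a single Hamiltonian cycle of $\cay(G,S)$.

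The hard part is this final merging step. A single cross-over edge does not by itself allow a cycle-preserving exchange: a two-edge exchange needs a matched pair of parallel cross-over edges sitting on consecutive positions of the cycles, and whether such a pair is available depends on the commutation relations between the generators used by $\bar C$ and the off-cycle generator. Controlling this, and ensuring that the $p-1$ exchanges one performs genuinely merge the copies into one cycle rather than splitting off shorter subcycles, is where the real work lies. I expect the argument to branch here into cases according to whether $\cay(\bar G,\bar S)$ is bipartite, whether the relevant generators have order $2$, and whether $\bar G$ is too small for the generic construction; in these degenerate situations one instead exhibits a Hamiltonian cycle of the quotient whose voltage is already nonzero, or builds the cycle of $\cay(G,S)$ directly.
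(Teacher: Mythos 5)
Your reduction is the right one and matches the standard strategy (it is also the skeleton of this paper's infinite analogue, Theorem~\ref{main_2}): pass to $\overline{G}=G/G'$, invoke the factor group lemma so that a hamiltonian cycle of $\cay(\overline{G};\overline{S})$ with non-trivial voltage $v$ lifts, since $\langle v\rangle = G'\cong\Z_p$, to a single hamiltonian cycle of $\cay(G;S)$. That part of your argument is correct. Note, though, that the paper does not prove this statement at all --- it is quoted from Durnberger --- so the only meaningful comparison is with the paper's machinery for the infinite version.

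The genuine gap is that you never actually produce a hamiltonian cycle of non-zero voltage; you only explain why one would suffice and then, in your last paragraph, defer the construction (``this is where the real work lies'', with a list of anticipated case splits). That construction \emph{is} the theorem: everything before it is the routine covering-space setup. Two specific points fail as stated. First, in your merging picture, the existence of cross-over edges of a fixed shift $d$ with $\gcd(d,p)=1$ does not let you splice the $p$ disjoint lifts $C_0,\dots,C_{p-1}$; a two-edge exchange needs a cross-over edge whose endpoints sit at \emph{consecutive} positions of the cycles being merged (exactly the ``aligned''/``jumping edge'' condition of this paper), and whether such a configuration exists is precisely what must be proved --- your commutator-square observation shows that \emph{some} cross-over edge exists, not that one exists in the required position, nor that $p-1$ such exchanges can be performed without splitting the cycle back apart. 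Second, you do not treat $S\cap G'\neq\emptyset$, where $\cay(\overline{G};\overline{S})$ degenerates (loops), which the paper has to handle separately even in its own setting (\Cref{S_intersect_G'}). The content needed to close the gap is the finite counterpart of \Cref{always_align}: a case analysis over the structure of the abelian quotient (cyclic or not, the subgroup generated by a non-commuting pair $x,y$ finite or of finite index, etc.) showing that the quotient always admits a hamiltonian cycle whose voltage is a conjugate of $[x,y]\neq 1$ or can be locally modified to become one. Without that, the proof is an accurate plan, not a proof.
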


\noindent Our first result is an extension of \cref{finitezp} to infinite locally finite Cayley  graphs.

\begin{restatable}{thm}{maintwo}
\label{main_2}
Let $G=\langle S\rangle$ with $G'\cong \Z_p$.
Then $\cay(G;S)$ has a two-way
hamiltonian path.
\end{restatable}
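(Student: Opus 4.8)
The plan is to realize $X=\cay(G;S)$ as a regular $\Z_p$-cover of the Cayley graph of the abelianization and to lift a hamiltonian double ray from the base. First I would dispose of the finite case: since $G'\cong\Z_p$ is finite and $G$ is finitely generated, $A:=G/G'$ is a finitely generated abelian group, and if $A$ is finite then so is $G$, which is covered by \cref{finitezp}; so we may assume $A$, and hence $G$, is infinite. The quotient map $q\colon G\to A$ then turns $X$ into a connected regular covering of $\bar X:=\cay(A;\bar S)$ whose deck group is $G'$ acting by left multiplication, with every fibre of size $p$. By \cref{abelian_ham} the base $\bar X$ carries a two-way hamiltonian path $\bar P=(\dots,\bar x_{-1},\bar x_0,\bar x_1,\dots)$, and this is the object I would lift.

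Choosing a vertex over $\bar x_0$ and following the generators used by $\bar P$ produces a double ray $L_0$ in $X$ meeting each fibre exactly once; since a nontrivial deck transformation acts freely and would move the unique point of $L_0$ in a fibre off $L_0$, the $\Z_p$-translates $L_0,\dots,L_{p-1}$ of $L_0$ are pairwise disjoint and, being $p$ in number, partition $V(X)$. Thus the lift of $\bar P$ is exactly $p$ disjoint parallel double rays, cyclically permuted by $G'$, and it remains only to splice them into a single double ray. For the splicing I would use the edges of $X$ transverse to the rays: because $X$ is connected while $\bar X$ is its $\Z_p$-quotient, the monodromy $\pi_1(\bar X)\to\Z_p$ is onto, so there is an edge of $\bar X$ (either a generator lying in $G'$, giving a rung at every vertex, or a chord of $\bar P$ carrying nontrivial voltage) whose lifts join one ray to another. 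Because $p$ is prime, any such nontrivial transition already generates all of $G'$, so transitions linking every pair of rays are available, exactly as in Durnberger's finite argument underlying \cref{finitezp}. A single merge removes two lifted path-edges and inserts two transverse edges to concatenate two rays, and performing $p-1$ such merges at well-separated places along $\bar P$ should yield one spanning double ray. The ambient structure on which this is carried out is essentially a twisted prism $C_p\cartprod(\text{double ray})$ over $\bar P$, in the spirit of the product graphs $\np$ and $\pp$ appearing elsewhere in the paper.

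The main obstacle is this final merging, performed consistently at infinity: I must guarantee that after the reroutes the result still has exactly two ends, contains no finite cycle, and still spans, despite the possible twist in how the deck group shifts the rung endpoints as one moves along $\bar P$, and despite $\bar P$ itself snaking through an infinite grid when $A$ has rank $\ge 2$. I expect to control this by confining all merges to a single finite section of the cover and invoking \cref{finitezp} on the corresponding finite blocks to realize the required entry--exit hamiltonian connections, so that outside finitely many slabs the $p$ rays run unperturbed and the two ends are manifestly preserved.
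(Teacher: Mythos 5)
Your global picture --- view $\cay(G;S)$ as a regular $\Z_p$-cover of $\cay(G/G';\s)$, lift a hamiltonian double ray from the abelian base to get $p$ disjoint parallel double rays, and splice them using transverse edges --- is exactly the paper's set-up (\cref{|N|-liftings}, \cref{S_not_intersect_G'}, with your ``rung at every vertex'' aside corresponding to \cref{S_intersect_G'}). But your splicing plan has a fatal quantitative flaw. You propose $p-1$ merges ``confined to a single finite section'' so that ``outside finitely many slabs the $p$ rays run unperturbed.'' Count tails: take a finite ball containing all your modifications; outside it your spanning subgraph coincides with the $p$ untouched double rays, so it has $2p$ infinite tails leaving the ball, whereas a single two-way hamiltonian path has exactly $2$. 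Hence after any finite number of local reroutes the result still decomposes into $p$ double-ray components (indeed each of your merges, which deletes one edge from each of two double rays and inserts two transverse edges, turns two double rays into two double rays and never reduces the component count). The correct construction must switch sheets infinitely often in both directions: the paper chops the base path into infinitely many finite blocks via an \emph{aligned} sequence of jumping edges, snakes through all $p$ sheets inside every block (\cref{hc_in_blocks}, \cref{blocks_have_HC}), and concatenates consecutive blocks by a matching.

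This exposes the second gap: you need, at infinitely many well-placed positions, a chord of the chosen base path whose fundamental cycle has non-trivial voltage in $G'$, and these chords must be \emph{aligned} with the path so the resulting blocks tile $V(X)$. Surjectivity of the monodromy $\pi_1(\bar X)\to\Z_p$ only tells you that some closed walk has non-trivial voltage; it does not produce a hamiltonian path in the base together with compatibly positioned non-trivially-voltaged chords. Arranging this simultaneously is the paper's main technical effort (the spanning-grid constructions and the recursive voltage-correcting modifications of Section~\ref{long}, culminating in \cref{always_align}), and it is not something you can outsource to ``Durnberger's finite argument'': \cref{finitezp} applied to a finite block gives a hamiltonian cycle of that block only once you already know the block's lift is connected, which is precisely the non-trivial-voltage condition you have not established.
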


\noindent Then our next result is a generalization of the preceding theorem to infinite locally finite transitive  graphs.

\begin{restatable}{thm}{mainone}
\label{thm:mainone}
Let $X$ be an infinite locally finite $G$-transitive graph such that $G'\cong \Z_p$. Then $X$ has a two-way hamiltonian path.
\end{restatable}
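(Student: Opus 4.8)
The plan is to reduce the transitive setting to the Cayley setting underlying \cref{main_2} by passing to the quotient modulo $N:=G'\cong\Z_p$. First I would record that $N$ is normal in $G$ and acts \emph{freely} on $V(X)$: the intersection $N\cap G_v$ of $N$ with a point stabiliser $G_v$ is a subgroup of $G'\cong\Z_p$, hence trivial or all of $N$; were it all of $N$ for some $v$, the normal subgroup $N$ would fix $v$ and therefore, by transitivity, every vertex, contradicting that $G\le\aut(X)$ acts faithfully. Thus every stabiliser meets $N$ trivially, the $N$-orbits all have size $p$, and $X$ is a connected regular $\Z_p$-cover of the quotient graph $\overline{X}:=X/N$.

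Next I would analyse $\overline{X}$. The group $\overline{G}:=G/N$ is abelian (since $N=G'$) and acts transitively on $\overline{X}$. Because $\overline{G}$ is abelian, the stabiliser $\overline{H}$ of a vertex $\overline v$ is normal, so by transitivity $\overline{H}$ fixes every vertex; that is, $\overline{H}$ is exactly the kernel of the $\overline{G}$-action, and $\overline{A}:=\overline{G}/\overline{H}$ acts regularly on $\overline{X}$. Hence $\overline{X}\cong\cay(\overline{A};\overline{S})$ for a finite connection set $\overline{S}$ (finite since $\overline{X}$ is locally finite), and $\overline{A}$ is a finitely generated abelian group. As $V(X)$ is infinite and the fibres are finite, $\overline{X}$ is infinite, so $\overline{A}$ is infinite and \cref{abelian_ham}(ii) supplies a two-way hamiltonian path $\overline{P}=(\ldots,\overline{x}_{-1},\overline{x}_0,\overline{x}_1,\ldots)$ in $\overline{X}$.

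It remains to lift $\overline{P}$ to a two-way hamiltonian path of $X$. Identifying $V(X)$ with $V(\overline{X})\times\Z_p$ via the covering, the fibres $\{\overline{x}\}\times\Z_p$ are independent sets, so every move must use a base edge, whose effect on the $\Z_p$-coordinate is its voltage. Lifting $\overline{P}$ from a single start vertex traces out only one of the $p$ sheets, so to obtain a hamiltonian path one must weave through all $p$ sheets. Here I would use that $X$ is connected, which forces the voltage homomorphism onto $\Z_p$ to be surjective and in particular produces a closed walk in $\overline{X}$ whose voltage generates $\Z_p$ (so $\overline{X}$ is not a tree). This is precisely the covering configuration driving the proof of \cref{main_2}, where $\cay(G;S)$ is realised as a $\Z_p=G'$-cover of the abelian Cayley graph $\cay(G/G';\cdot)$; the construction there manufactures the two-way hamiltonian path from the hamiltonian structure of the abelian base together with a sheet-switching walk of generating voltage, and I expect it never to use that the total space is itself a Cayley graph. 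I would therefore invoke the same construction verbatim for $X$.

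The main obstacle is exactly this lifting step: assembling a single two-way infinite path that visits every one of the $p$ sheets, using only base edges and controlling the two ends simultaneously. The flexibility of hamiltonian paths in finitely generated abelian Cayley graphs—rerouting along the abundant cycles, and splicing in a generating-voltage walk to change sheets—is what makes this possible, and carrying it out carefully is where the real work lies; the group-theoretic reduction to the abelian $\Z_p$-cover, by contrast, is routine.
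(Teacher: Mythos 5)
Your reduction to the quotient is sound and matches the paper's: $G'\cong\Z_p$ meets every vertex stabiliser trivially (otherwise a non-trivial normal subgroup would lie in a stabiliser, contradicting faithfulness together with transitivity, cf.\ \Cref{Gcapst=1}), so the $G'$-orbits have size $p$ and partition $V(X)$ into $p$ disjoint lifts of a hamiltonian path of the quotient (\Cref{|N|-liftings}); and $X/G'$ is indeed a Cayley graph of an infinite, finitely generated abelian group (\Cref{quotient}), to which \Cref{abelian_ham} applies.

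The gap is in the lifting step, precisely the one you flag as ``the real work'' and then outsource to the proof of \Cref{main_2} ``verbatim''. That construction \emph{does} use that the total space is a Cayley graph of $G$: \Cref{always_align} builds the aligned sequence by fixing non-commuting generators $x,y\in S$ and computing the voltage of each cycle $C_\ell$ as a product of generators of $G$ read off the edge labels, then locally rerouting the path until every such voltage is a conjugate of $[x,y]\neq 1$. In the transitive setting the edges of $X$ carry no such labels, and, worse, the quotient map $X\to X/G'$ is not a covering with a well-defined $\Z_p$-voltage per edge: a single edge of $X/G'$ can simultaneously have bouncing lifts (joining different sheets) and non-bouncing ones, so the voltage homomorphism your argument posits does not exist. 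The paper closes this gap by a different mechanism: using that some pair of adjacent vertices has distinct stabilisers (\Cref{cor}) and that $G'\st_G(v)$ is normal in $G$, it proves that \emph{every} edge of $X/G'$ admits at least one bouncing lift; this replaces the voltage condition and suffices to make each block hamiltonian (\Cref{hc_in_blocks}, \Cref{blocks_have_HC}) and to splice the blocks together by matchings. Note also that the single ``closed walk of generating voltage'' you extract from connectivity would not suffice even if voltages were well defined: one needs infinitely many sheet-switching cycles aligned along the path, one per block, in both directions. (A smaller unjustified claim: the fibres need not be independent sets; in the Cayley case this is exactly why \Cref{S_intersect_G'} is treated separately.)
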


\noindent {\bf An overview of the proof techniques:}
%\todo[color=green]{BM: I have reworded this.}
We first prove that every Cayley graph of a finitely generated group $G$, where $G'\cong \Z_p$, contains a two-way hamiltonian path. We then extend this result to infinite, locally finite, transitive graphs whose automorphism groups have a commutator subgroup that is cyclic of prime order.

The first step in the proof of Theorem \ref{main_2} is to find a two-way hamiltonian path $\gamma$ in a Cayley graph of $G/G'$ together with an infinite set of edges $F$ aligned with $\gamma$, see Figure \ref{twoended} for a sketch. More precisely, we define the following:
\begin{defn}
%\todo{FL: I need to include parallel edges to deal with the case where all elements of $\s$ coincide}
Let $X$ be a multigraph and let $\gamma\colon\ldots, v_{-1}, v_0, v_1, \ldots$ be a two-way-infinite  path in $X$.
Call an infinite sequence $\{e_{\ell}=(x_{\ell},y_{\ell})\mid \ell\in \Z\}$ of edges of $X$ \defin{aligned} with $\gamma$ if none of the edges $e_\ell$ is contained in $\gamma$, and for every $\ell \in \mathbb Z$ there is some $j \in \mathbb Z$ and $k \geq 1$ such that $x_\ell = v_j$, $y_\ell = v_{j+k}$, and $x_{\ell+1} = v_{j+k+1}$. In this case, each edge $e_\ell$ is called \defin{jumping}, see \cref{twoended}.
\end{defn}
We note that the case $k = 1$ in the above definition can only occur in multigraphs, since in this case the edge $e_\ell$ is parallel to an edge in the infinite path $\gamma$.

\begin{figure}%[H]
    \centering
    \includegraphics{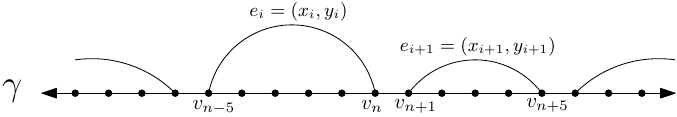}
    \caption{An example of a two-way infinite path aligned with $[e_{\ell}\mid \ell\in \Z]$.}
    \label{twoended}
\end{figure}

\begin{nota}
    If $\gamma$ is a two-way infinite path aligned with a sequence $[e_{\ell}=(x_{\ell},y_{\ell})\mid \ell\in \Z]$,
    then adding an edge $e_{\ell}$ to $\gamma$ gives rise to a unique cycle. We denote this cycle by $C_{\ell}$.
\end{nota}

The second step in our strategy is to use an aligned sequence of edges to ``lift'' a hamiltonian path in a Cayley graph of $G/G'$ to a hamiltonian path in a Cayley graph of $G$. To simplify notation, we introduce the following notation, where usually $N$ will be equal to~$G'$.

\begin{nota}
    Let $G$ be a group with a normal subgroup $N$.
    Then there is a projection homomorphism $\pi\colon G\to G/N$.
    For every subgroup(subset) of $H$ of $G$, we denote the image of $H$ under $\pi$ by $\overline H$; for every $g \in G$ we denote the coset $gN$ by $\overline{g}$.
\end{nota}

Let's return to the strategy. 
Since we assume that $G'$ is finite, each cycle $C_\ell$ in the Cayley graph $G/G'$ lifts to a finite subgraph of the Cayley graph of $G$, referred to as a block ($\pi^{-1}(C_\ell)$), see \Cref{def_block}.

The trickiest part of the proof is finding a hamiltonian path with an aligned sequence that ensures that each block has a hamiltonian cycle. To this end, we use the concept of voltage.

\begin{defn}[cf.~{\cite{Ebrahim}}]
    Let $ G = \langle S \rangle $ be a group, where $S$ is finite, and let $N$ be a normal subgroup of $G$. 
    Let  $C\coloneqq \overline{g}(\overline{s_1},\ldots,\overline{s_n})$ be a cycle in $\cay(\overline{G};\s)$.
    The \defin{voltage} of $C$ denoted by $\vol(C)$ is defined as the product $s_1\cdots s_n$.
\end{defn}

\noindent It can be shown (see \Cref{hc_in_blocks}) that if the voltage of $C_\ell$ is non-trivial, then the corresponding block has a hamiltonian cycle.

Finally, if every block has a hamiltonian cycle, we can combine suitable hamiltonian paths within each block to form a two-way hamiltonian path in the Cayley graph of $G$, see \Cref{enter-label}. 

%\todo[color=green]{I added the next paragraph for transitive graphs.}
Next, let $X$ be a locally finite graph, where a group $ G $ acts transitively on $ X $ and $ G' = \mathbb{Z}_p $. 
We leverage the existence of a two-way hamiltonian path in a Cayley graph to construct a two-way hamiltonian path in $X$.
More specifically, we consider the quotient graph $ X/G' $, which is a Cayley graph of an abelian group. A two-way hamiltonian path in the quotient is lifted to $ X $, forming paths in $X$ corresponding to the cosets of $ G' $. 
As in the previous case, blocks are defined in $ X $, and we show that each block contains a hamiltonian path. These paths are then connected across blocks via a matching, resulting in the desired two-way hamiltonian path.

%%%%%%%%%%%%%%%%%%%%%%%%%%%%%%%%%%%%%%%%%%%%%%%%%%%%%
%%%%%%%%%%%%%%%%%%%%%%%%%%%%%%%%%%%%%%%%%%%%%%%%%%%%%

\section{Preliminaries}
Throughout this paper, we assume that  $G$  is an infinite group with a finite generating set  $S$  that is symmetric and minimal.
When considering Cayley graphs we always assume them to be connected.
%\todo{FL: I think this notation is only used in Lem 22, move it there?}

%%%%%%%%%%%%%%%%%%%%%%%%%%%%%%%%%%%%%%%%%%%%%%%%%%%%%%%%%%%%%%%%%%%%%%%%%%%%%%%%%%%%%%%%%%%%%%%%%%%%%%%%%%%%%%%%%%%%%%%%%%%%%%%%%%%%%%%%%%%%%%%%%%%%%%%%%%%%%

\subsection{Brief review on quotient graphs}

\begin{defn} [cf.~{\cite[\S2 pp 4] {DicksandDunwoody}}]
    A \defin{$G$-graph} is a graph $X$ admitting an injective homomorphism $\varphi\colon G \to \mathsf{Aut}(X)$.
    If the action $G$ on $X$ is transitive, then we say $X$ is a \defin{transitive $G$-graph}.
\end{defn}
In this paper, we assume all $ G $-actions are faithful, meaning there are no group elements $ g \in G $ (other than the identity element) such that $ g x = x $ for all $ x $ in $ X $.

%\textcolor{red}{I added the word injective in the above definition, otherwise every graph is a $G$-graph by taking the trivial homomorphism}

%\begin{defn}[cf.~{\cite[page 248]{MR1307623}}]
%    A group action $\varphi \colon G\times X\to X$ is called \defin{faithful} if there are no group elements $g$ (except the identity element) such that $gx=x$ for all $x$ in $X$.
%\end{defn}

%\begin{rem}
%    If $X$ is a $G$-graph, then there is a faithful action of $G$ on $X$.
%\end{rem}

\begin{defn}[{\cite[Definition 2.8.]{transitive-ham}}]
Let $X$ be a $G$-graph and let $H$ be a subgroup of $G$. 
The \defin{quotient graph} $X/H$ is that graph whose vertices are the
$H$-orbits, and two such vertices $Hx$ and $Hy$ are adjacent in $X/H$ if and only if there is an edge in $X$ joining a vertex of $Hx$ to a vertex of $Hy$. 
\end{defn}
\begin{obs}
If $X$ is a transitive $G$-graph and  $H$ is normal in $G$, then $X/H$ is vertex-transitive: the action
of $G$ on $V (X)$ factors through to a transitive action of $G/H$ on $V (X/H)$, by automorphisms of $X/H$.  
\end{obs}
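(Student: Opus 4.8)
The plan is to directly construct the claimed transitive action of $G/H$ on the vertex set of $X/H$, verifying in turn that the induced map is well defined, acts by automorphisms, factors through $H$, and is transitive. Recall that $V(X/H)$ consists of the $H$-orbits $Hx$ for $x \in V(X)$.

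First I would define an action of $G$ on $V(X/H)$ by setting $g \cdot (Hx) := \{ghx : h \in H\}$ and observe that this set is again an $H$-orbit. This is exactly the point where normality of $H$ is used: since $gH = Hg$, we have $g(Hx) = (gH)x = (Hg)x = H(gx)$, so $g$ sends the orbit $Hx$ to the orbit $H(gx)$. In particular the map is well defined on $V(X/H)$ (independent of the chosen representative), since a different representative $hx$ of $Hx$ gives $H(ghx) = H(gx)$ because $ghx \in H(gx)$.

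Next I would check that each $g$ acts as an automorphism of $X/H$. Suppose $Hx$ and $Hy$ are adjacent in $X/H$, so that $X$ contains an edge between some $h_1 x \in Hx$ and $h_2 y \in Hy$. Because $g$ acts on $X$ as a graph automorphism, $X$ contains an edge between $g h_1 x$ and $g h_2 y$, and these lie in $H(gx)$ and $H(gy)$ respectively by the computation above. Hence $H(gx)$ and $H(gy)$ are adjacent in $X/H$, i.e.\ $g$ preserves adjacency. Applying the same argument to $g^{-1}$ shows $g$ is an automorphism of $X/H$, so we obtain a homomorphism $G \to \mathsf{Aut}(X/H)$.

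Finally, I would show the action factors through $G/H$ and is transitive. For $h \in H$ we have $h(Hx) = (hH)x = Hx$, so $H$ lies in the kernel of $G \to \mathsf{Aut}(X/H)$ and the action descends to $G/H$. Transitivity is then immediate from transitivity of $G$ on $V(X)$: given orbits $Hx$ and $Hy$, choose $g \in G$ with $gx = y$; then $g(Hx) = H(gx) = Hy$. The whole argument is routine, and the only genuinely load-bearing step is the well-definedness in the first paragraph, where normality of $H$ is essential.
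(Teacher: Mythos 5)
Your proof is correct and is exactly the argument the paper has in mind: the observation is stated without proof, with only the one-line justification that the $G$-action ``factors through to a transitive action of $G/H$ by automorphisms,'' and your write-up simply verifies that claim in the standard way (well-definedness via normality, adjacency preservation, $H$ in the kernel, transitivity). No issues.
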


\begin{nota}
Let $X$ be a $G$-graph and let $N$ be a normal subgroup of $G$.   
Then there is a canonical homomorphism $\pi\colon X\to X/N$.
In particular, if $X=\cay(G;S)$, then there is a graph homomorphism $\pi\colon \cay(G;S)\to \cay(G/N,SN)$.
\end{nota}

\begin{defn}
Let $X$ be a $G$-graph and let $H$ be a subgroup of $G$. 
Let $e\in E(X/H)$. Then an edge $(u',v')$ of $X$ is called a \defin{lifting} of $e$ if $\pi((u',v'))=e$.
A lifting of a subgraph is defined analogously.
\end{defn}

Our next goal is to show that given a $G$-graph $X$ where $G' = \mathbb Z_n$, and a hamiltonian path $\gamma$ in $X/G'$ we can cover $X$ with liftings of $\gamma$ each of which is a two-way infinite path. The following lemma is well-known, but we provide a proof for the convenience of the reader.

\begin{lem}\label{lift_edge}
Let $X$ be a $G$-graph and let $H$ be a subgroup of $G$.
For every edge $(u,v)\in E(X/H)$ and every $x\in \pi^{-1}(u)$, there is an edge $(x,y)\in E(X)$ such that $\pi((x,y))=(u,v)$.
\end{lem}

\begin{proof}
By definition, there exist adjacent vertices $x'\in \pi^{-1}(u)$ and $y'\in \pi^{-1}(v)$. Since $x'\in \pi^{-1}(u)$ and $x\in \pi^{-1}(u)$, there exists $h\in H$ such that $hx'=x$, implying that $x$ is adjacent to $hy'$.
\end{proof}

\begin{lem}[{\cite[Corollary 2.5]{transitive-ham}}]\label{Gcapst=1} %\todo{what is this a corollary to? unless we also state the main theorem, I'd suggest calling this a Lemma}
Let $X$ be a transitive $G$-graph with a vertex $v$.
Then the stabilizer of $v$ i.e. $\st_G(v)$, does not contain a non-trivial, normal subgroup of $G$.
\end{lem}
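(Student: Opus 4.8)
The plan is to show that any normal subgroup $N$ of $G$ contained in $\st_G(v)$ must fix every vertex of $X$; the standing faithfulness assumption then forces $N$ to be trivial. So I would suppose $N$ is normal in $G$ with $N \subseteq \st_G(v)$, and fix an arbitrary vertex $w \in V(X)$.

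By transitivity there is some $g \in G$ with $gv = w$. For any $n \in N$ I would compute $nw = n(gv) = (ng)v = g(g^{-1}ng)v$. Since $N$ is normal, $g^{-1}ng \in N \subseteq \st_G(v)$, so $(g^{-1}ng)v = v$ and hence $nw = gv = w$. As $w$ was arbitrary, every element of $N$ fixes every vertex of $X$.

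Finally I would invoke the standing assumption that the $G$-action on $X$ is faithful: an element fixing all vertices is the identity, so $N = \{e\}$ is trivial.

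The step doing the real work is the normality rearrangement $ng = g(g^{-1}ng)$, which transfers the fixing property from $v$ to the arbitrary vertex $w$, with transitivity ensuring every vertex arises as $gv$. There is no genuine obstacle here; the only subtlety worth flagging is that the conclusion is \emph{false} without faithfulness, since then a non-trivial normal subgroup could sit inside the kernel of the action and thus inside every stabilizer. Recognizing that faithfulness is precisely the hypothesis excluding this is the conceptual core of the argument.
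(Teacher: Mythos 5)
Your proof is correct: a normal subgroup contained in one point stabilizer is contained in every conjugate stabilizer, hence in the kernel of the action, and the paper's standing faithfulness assumption then forces it to be trivial. The paper itself gives no proof of this lemma (it is cited from an external reference), but your argument is the standard one and there is nothing to add; you are also right to flag that faithfulness is the essential hypothesis.
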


\begin{lem}\label{|N|-liftings}
Let $X$ be a $G$-graph, where $G'=\Z_{n}$.
Then for every  two-way hamiltonian path $\overline{\gamma}$ of $X/G'$, there is a set $L(\overline{\gamma})\coloneqq \{\gamma_1,\ldots,\gamma_{n}\}$ of liftings of $\overline{\gamma}$  such that each $\gamma_i$ is a two-sided infinite path, and
\[V(X)=\bigsqcup_{i=1}^{n} V(\gamma_i).\]
\end{lem}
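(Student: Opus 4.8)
The plan is to build a single two-way infinite lifting $\gamma_1$ of $\overline\gamma$ by greedy edge-by-edge lifting, and then obtain the remaining liftings as the translates $g\cdot\gamma_1$ for $g\in G'$; the disjoint-union property will come from the fact that $G'$ acts \emph{freely} on $V(X)$, so that its $n$ translates are pairwise disjoint and together exhaust each fibre. I would first record the basic structural fact that, by definition of the quotient graph, every fibre is a $G'$-orbit, i.e.\ $\pi^{-1}(\overline v)=G'\cdot x$ for any $x\in\pi^{-1}(\overline v)$.

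The crux of the argument, and the step I expect to be the main obstacle, is showing that $G'$ acts freely, equivalently that $\st_{G'}(v)=G'\cap\st_G(v)$ is trivial for every vertex $v$ (so that each fibre has exactly $n$ vertices). Here I would use that $G'\cong\Z_n$ is cyclic: a cyclic group has a unique subgroup of each order dividing $n$, so the subgroup $G'\cap\st_G(v)$ is invariant under conjugation by every $g\in G$ (its conjugates again lie in $G'$, since $G'$ is normal in $G$, and they have the same order). Hence $G'\cap\st_G(v)$ is a normal subgroup of $G$ contained in $\st_G(v)$, and \cref{Gcapst=1} forces it to be trivial. (For a Cayley graph this is immediate, since the regular action is already free.) Thus every fibre $\pi^{-1}(\overline v)$ has precisely $n$ elements.

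With freeness in hand, the construction is routine. Writing $\overline\gamma$ as $\ldots,\overline v_{-1},\overline v_0,\overline v_1,\ldots$, I would fix any $x_0\in\pi^{-1}(\overline v_0)$ and extend in both directions using \cref{lift_edge}: given $x_j\in\pi^{-1}(\overline v_j)$, applying the lemma to the edge $(\overline v_j,\overline v_{j+1})$ and the vertex $x_j$ produces a neighbour $x_{j+1}\in\pi^{-1}(\overline v_{j+1})$, and symmetrically one obtains $x_{j-1}$ from the edge $(\overline v_j,\overline v_{j-1})$. This yields a two-way infinite walk $\gamma_1\colon\ldots,x_{-1},x_0,x_1,\ldots$ with $\pi(\gamma_1)=\overline\gamma$. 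Since $\overline\gamma$ is a hamiltonian path, the $\overline v_j$ are pairwise distinct, so the $x_j$ lie in distinct fibres and are in particular distinct; hence $\gamma_1$ is a genuine (simple) two-way infinite path and a lifting of $\overline\gamma$.

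Finally, for each $g\in G'$ set $\gamma_g:=g\cdot\gamma_1$. As $g$ is a graph automorphism acting trivially on $X/G'$, each $\gamma_g$ is again a two-way infinite path lifting $\overline\gamma$. If $\gamma_g$ and $\gamma_{g'}$ met, say $gx_j=g'x_k$, projecting gives $\overline v_j=\overline v_k$, hence $j=k$, and then freeness yields $g=g'$; so the $\gamma_g$ are pairwise disjoint and there are exactly $n=|G'|$ of them. Moreover, within each fibre $\pi^{-1}(\overline v_j)=G'x_j$ the paths visit exactly the points $\{gx_j\mid g\in G'\}$, which by freeness is the whole fibre. Taking the union over all $j$ gives $V(X)=\bigsqcup_{g\in G'}V(\gamma_g)$, so $L(\overline\gamma):=\{\gamma_g\mid g\in G'\}$ is the desired set of liftings.
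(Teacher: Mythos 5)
Your proposal is correct and follows essentially the same route as the paper: take one lift of $\overline{\gamma}$, translate it by the elements of $G'$, and derive disjointness from the fact that $G'\cap\st_G(v)$ is characteristic in the cyclic group $G'$, hence normal in $G$, hence trivial by \cref{Gcapst=1}. You merely make two steps more explicit than the paper does (the edge-by-edge construction of the initial lift via \cref{lift_edge}, and the observation that each fibre is a full $G'$-orbit so the translates exhaust $V(X)$), which is a welcome but not substantively different elaboration.
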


\begin{proof}
Let $ G' = \{g_1,g_2,\ldots,g_{n}\}  $, where $ g_1 = 1 $, and $ \overline{\gamma} $ be a two-way hamiltonian path in $X/G' $ with the following lifting:
$ \ldots, v_{-2}, v_{-1}, v_0, v_{1}, v_{2}, \ldots$
Also, let $ \gamma_i $ be a lifting of $ \overline{\gamma} $ in $ G $ as follows:
$\ldots, g_iv_{-2}, g_iv_{-1}, g_iv_0, g_iv_{1}, g_iv_{2}, \ldots$
where $ i \in \{1,2,\ldots,n\} $. 
Next, we show that they are vertex-disjoint. 
Assume to the contrary that $g_iv_{n'}=g_jv_{m'}$.
Thus, we have $v_{n'}=g_i^{-1}g_jv_{m'}$ which implies that $G'[v_{n'}]=G'[v_{m'}]$.
If $v_{n'}\neq v_{m'}$, then it yields a contradiction, as $\overline \gamma$ is a hamiltonian two-way path in $X/G'$.
So we infer that $v_{n'}=v_{m'}$ which implies that $g_i^{-1}g_j\in \st(v_{n'})$.
If $g_i\neq g_j$, then $(G'\cap \st_G(v_{n'})) \,\mathsf{char }\, G' \unlhd \,G$ which implies that $G'\cap \st_G(v_{n'})$ is a normal subgroup of $G$.
It now follows from \Cref{Gcapst=1} that $\st_G(v_{n'})$ has a non-trivial normal subgroup which yields a contradiction and so~$g_i=g_j$.
\end{proof}

\noindent Let $G=\langle S\rangle $ be a group with a finite commutator subgroup.
Let $\cay(\g;\s)$ have a two-way hamiltonian path $\gamma$ aligned with sequence $[e_{\ell}\mid \ell\in \Z]$. 
Every jumping edge $e_{\ell}$ defines a block of vertices in $\cay(G;S)$ in the following way:

\begin{defn}\label{blocks}
Let $X$ be a transitive $G$-graph, where $|G'|<\infty$.
Let $\gamma\colon \ldots,v_1,v_0,v_1,\ldots$ be a two-way hamiltonian path in $X/G'$ aligned with a sequence $[e_{\ell}\mid {\ell}\in \Z]$.
Then the \defin{block $B((v_{i_n},v_{j_n}))$} is the  induced  subgraph of $X$ on the  vertices $\bigcup_{i_n\leq i\leq j_n} \pi^{-1} (v_{i})$, where $e_{n}=(v_{i_{n}},v_{j_{n}})$.
\begin{figure}%[H]
    \centering
    \includegraphics[scale=0.6]{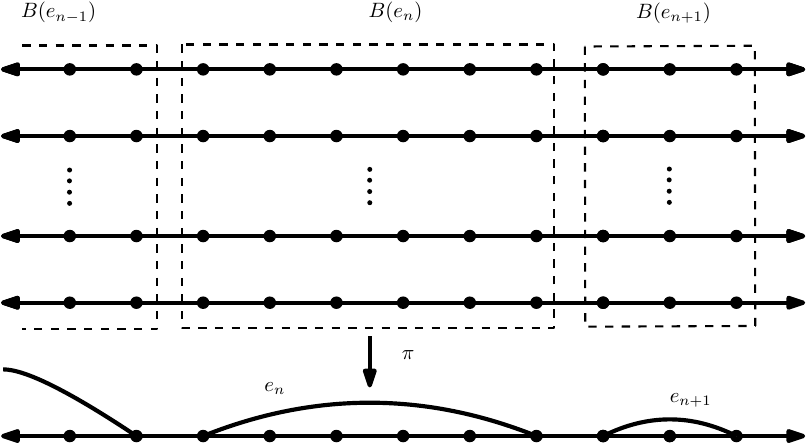}
    \caption{The Block is defined by a two-way hamiltonian path aligned with a sequence.}
    \label{def_block}
\end{figure}
\noindent The \defin{entrance} of $B(e_n)$ are the set of vertices $\pi^{-1}(v_{i_n})$ and the \defin{exit} of $B(e_{n})$ are the set of vertices $\pi^{-1}(v_{j_n})$. 
\end{defn}

\begin{defn}
Let $X$ be a $G$-graph with an edge $(u,v)\in E(X/G')$ and let $G'=\{g_1,\ldots,g_n\}$.
A lifting $(g_iu,g_jv)$ of $(u,v)$ is called \defin{bouncing edge} if $i\neq j$.
\end{defn}

\begin{rem}\label{bounce-vol}
It is not hard to see that an edge $e_\ell$ is bouncing if and only if $\vol(C_\ell)\neq 1$.
We note that if a lifting of an edge $e_\ell$ is bouncing, then there is a bouncing edge adjacent to each lifting of an endpoint of $e_\ell$.
\end{rem}

\begin{lem}\label{hc_in_blocks}
Let $X$ be an infinite transitive $G$-graph with $G'\cong \Z_{p}$ for some prime $p$.
Let $\gamma$ be a two-way hamiltonian path in $X/G'$ aligned with a sequence $[e_{\ell}\mid {\ell}\in \Z]$ such that each $e_{\ell}$ is a bouncing edge.
Every block of $X$ has a hamiltonian cycle containing some bouncing edge.
\end{lem}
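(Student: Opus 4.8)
The plan is to realize each block explicitly as the union of a ``straight'' backbone of $p$ vertex-disjoint paths together with one $G'$-orbit of bouncing edges, and then to check that this union is already a single cycle through all of the block's vertices. Fix a block $B(e_n)=B((v_{i_n},v_{j_n}))$ and abbreviate $a=i_n$, $b=j_n$. Applying \Cref{|N|-liftings} with $n=p$, write $G'=\{g_1,\dots,g_p\}$ and let $\gamma_1,\dots,\gamma_p$ be the liftings of $\gamma$, where $\gamma_i=g_i\gamma_1$. Choosing the base vertices $x_a,\dots,x_b$ of $\gamma_1$ (so $x_k\in\pi^{-1}(v_k)$), the path $\gamma_i$ restricts on the interval $[a,b]$ to a path $P_i$ from the entrance vertex $g_ix_a$ to the exit vertex $g_ix_b$. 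Since the $\gamma_i$ partition $V(X)$, the paths $P_1,\dots,P_p$ are vertex-disjoint and together cover every vertex of $B(e_n)$; moreover in this labeling the backbone edges are ``straight'', i.e.\ $g_ix_k$ is joined to $g_ix_{k+1}$ for $a\le k<b$.

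Next I would analyze the bouncing edges. By \Cref{lift_edge} a single $G'$-orbit of liftings of $e_n$ is a perfect matching $M$ between the entrance fiber $\pi^{-1}(v_a)$ and the exit fiber $\pi^{-1}(v_b)$, say $g_ix_a$ joined to $g_{\tau(i)}x_b$. Because applying any $g_k\in G'$ carries a lifting to a lifting, the permutation $\tau$ commutes with the translations $i\mapsto i+k$ of $\Z_p$, whence $\tau(i)=i+w$ for a fixed $w\in\Z_p$. By \Cref{bounce-vol} this shift is exactly $\vol(C_n)$, which is nontrivial because $e_n$ is a bouncing edge, so $w\neq 0$.

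Finally I would assemble the cycle. The union $\bigl(\bigcup_{i}P_i\bigr)\cup M$ is $2$-regular: each interior vertex $g_ix_k$ with $a<k<b$ lies on exactly the two backbone edges, while every entrance and exit vertex lies on one backbone edge and one edge of $M$. Hence it is a disjoint union of cycles covering all of $B(e_n)$. Tracing one component from $g_ix_a$ along $P_i$ to $g_ix_b$, then along the $M$-edge to $g_{i-w}x_a$, then along $P_{i-w}$, and so on, visits the entrance indices $i,\,i-w,\,i-2w,\dots$; since $p$ is prime and $w\neq0$ these exhaust $\Z_p$, so the union is in fact a single cycle. This cycle is hamiltonian in $B(e_n)$ and contains the (bouncing) edges of $M$, as required.

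The one genuinely load-bearing point — and the reason primality is needed — is this last step: $\tau$ is a single $p$-cycle precisely because $\gcd(w,p)=1$ holds automatically once $w\neq0$ and $p$ is prime, whereas for composite $|G'|$ the same construction would typically split into several cycles. I therefore expect the real work to lie in the $G'$-equivariant identification of $\tau$ with translation by the voltage, and in checking that a single $G'$-orbit of liftings is indeed a perfect matching, rather than in the final combinatorics of closing up the cycle.
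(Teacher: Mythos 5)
Your proposal is correct and follows essentially the same route as the paper's proof: decompose the block into the $p$ disjoint subpaths of the liftings $\gamma_i$ between the entrance and exit fibers, and close them up into a single cycle using one $G'$-orbit of (bouncing) liftings of $e_n$. The only difference is that you make explicit the step the paper absorbs into a ``without loss of generality'' (choosing the generator $g$ of $G'$ so that the bouncing edge shifts by $g$ itself), by verifying directly that the matching permutation is translation by some $w\neq 0$ and that $w$ generates $\Z_p$ because $p$ is prime.
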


\begin{proof}
Let $ G' = \langle g\rangle  $,  and let $ \ldots, \overline v_{-2}, \overline v_{-1}, \overline v_0, \overline v_{1}, \overline v_{2}, \ldots$ be a two-way hamiltonian path in $X/G'$ with a lifting
$\ldots, v_{-2}, v_{-1}, v_0, v_{1}, v_{2}, \ldots$.
Without loss of generality, we may assume that the block $B(e_0)$ is $\pi^{-1}(\{\overline v_0, \dots, \overline v_{k}\})$ and that there is a bouncing edge $(v_0,g v_k)$.

For $ i \in \{0,1,2,\ldots,p-1\}$ there is a lifting $ \gamma_i $  of the two-way hamiltonian path in $X/G'$ defined as follows:
$\ldots, g^iv_{-2}, g^iv_{-1}, g^iv_0, g^iv_{1}, g^iv_{2}, \ldots$. By \Cref{|N|-liftings} the lifts $\gamma_i$ are pairwise disjoint.

We observe that the vertices of the block $B(e_0)$ are $\bigcup_{i=1}^{n} \{g^iv_0,g^iv_1 \ldots, g^iv_{k}\}$, and that there exists a subpath $p_i$ of $\gamma_{i}$ connecting $g^iv_0$ to $g^iv_{k}$).
We further note that $(g^iv_0 ,g^{i+1}v_{k})$ is a bouncing edge and that each bouncing edge $(g^iv_0, g^{i+1}v_{k})$ connects the first vertex of $p_i$ to the last vertex of $p_{i+1}$.

It follows from our construction that the edges of all paths $p_i$ together with the bouncing edges $(g^iv_0, g^{i+1}v_{k})$ form a hamiltonian cycle, see \cref{enter-label}.
\end{proof}

\begin{figure}[H]
    \centering
    \includegraphics[scale=0.8]{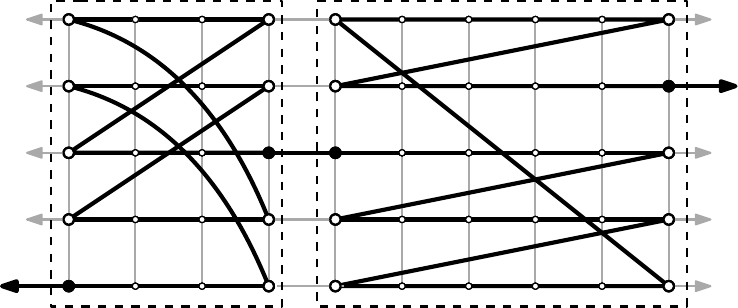}
    \caption{Connecting hamiltonian cycles from each block to a two hamiltonian path. The entrance and exit of each block are indicated by black vertices.}
    \label{enter-label}
\end{figure}

%\todo[inline, color=green]{I suggest to bring the picture here. FL: I agree!}
\begin{cor}\label{blocks_have_HC}
Let $B(e_{\ell})$ be an arbitrary block, and let $u$ be at the entrance of $B(e_{\ell})$.
Then there exists a hamiltonian path in $B(e_{\ell})$ starting with $u$ and ending at a vertex in the exit of $B(e_{\ell})$.
\end{cor}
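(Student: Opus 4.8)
The plan is to obtain the path by deleting a single edge from the hamiltonian cycle supplied by \Cref{hc_in_blocks}. Recall the explicit cycle built there: writing $G'=\langle g\rangle$ and normalising (as in that proof) so that the block is $\pi^{-1}(\{\overline v_0,\dots,\overline v_k\})$ with entrance $\pi^{-1}(v_0)$, exit $\pi^{-1}(v_k)$, and bouncing edge $(v_0,gv_k)$, the hamiltonian cycle $C$ is the union of the $p$ subpaths $p_i$, each running from $g^iv_0$ to $g^iv_k$ along the lift $\gamma_i$, together with the bouncing edges $(g^iv_0,g^{i+1}v_k)$ for $i\in\{0,\dots,p-1\}$, with indices taken modulo $p$.

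First I would note that any vertex $u$ of the entrance is of the form $g^iv_0$ for a unique $i$: by \Cref{|N|-liftings} the lifts $\gamma_0,\dots,\gamma_{p-1}$ partition $V(X)$, and each $\gamma_i$ meets the fiber $\pi^{-1}(v_0)$ in exactly the vertex $g^iv_0$ (since $\overline{\gamma}$ visits $\overline{v_0}$ only once), so $\pi^{-1}(v_0)=\{g^iv_0 : 0\le i\le p-1\}$. Fixing $u=g^iv_0$, the two edges of $C$ incident to $u$ are the edge of $p_i$ leading towards $g^iv_k$ and the bouncing edge $(g^iv_0,g^{i+1}v_k)$; since $\overline v_0\neq\overline v_k$, no further bouncing edge can meet $u$, so these are the only two. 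Deleting the bouncing edge $(g^iv_0,g^{i+1}v_k)$ from $C$ leaves a hamiltonian path of the block whose two endpoints are $u=g^iv_0$ and $g^{i+1}v_k$, and $g^{i+1}v_k\in\pi^{-1}(v_k)$ is a vertex of the exit, as required.

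I expect no genuine obstacle: the corollary is essentially a bookkeeping consequence of the structure of $C$. The only point deserving attention is the choice of which edge incident to $u$ to remove; removing the path edge of $p_i$ would leave the free endpoint in the interior of some $p_j$, whereas removing the bouncing edge is exactly what drives the free endpoint into the exit fiber $\pi^{-1}(v_k)$.
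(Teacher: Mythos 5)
Your proposal is correct and is exactly the argument the paper intends: the corollary is left without an explicit proof, but it follows from the hamiltonian cycle constructed in \Cref{hc_in_blocks} by deleting the bouncing edge incident to the chosen entrance vertex $u=g^iv_0$, leaving a hamiltonian path from $u$ to the exit vertex $g^{i+1}v_k$. Your attention to which of the two edges at $u$ to delete is the right (and only) point of care here.
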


%%%%%%%%%%%%%%%%%%%%%%%%%%%%%%%%%%%%%%%%%%%%%%%%%%%%%%%%%%%%%%%%%%%%%%%%%%%%
%%%%%%%%%%%%%%%%%%%%%%%%%%%%%%%%%%%%%%%%%%%%%%%%%%%%%%%%%%%%%%%%%%%%%%%%%%%%

\section{Aligned sequences with two-way paths}\label{long}
Let $G$ be an infinite finitely generated, non-abelian group whose commutator subgroup $G'$ has infinite index. Let $S$ be some finite generating set of $G$ and let $\s$ be the corresponding generating set of $\g = G/G'$. We will show that under these assumptions we can find a two-way hamiltonian path in $\cay(\g;\s)$ aligned with a sequence of edges such that the corresponding cycles have non-trivial voltages.

Recall the definition of the Cartesian product of two graphs.

\begin{defn}[cf.~{\cite[\S7.4]{Godsil}}] \label{CartProdDefn}
The \defin{Cartesian product} $X \cartprod Y$ of two graphs $X$ and $Y$ is the graph with vertex set $V(X) \times V(Y)$ where $(x_1, y_1)$ is joined to $(x_2, y_2)$ by an edge if and only if either $x_1 = x_2$ and $(y_1, y_2) \in E(Y)$ or $y_1 = y_2$ and $(x_1, x_2) \in E(X)$.
\end{defn}

Our general strategy is as follows. Since $G'$ has infinite index, $\g$ is an infinite abelian group. Therefore $\cay(\g;\s)$ has a spanning subgraph isomorphic to some grid $\pp$ or $\np$, where $P_n$ denotes the path of length $n$ and $P_{\infty}$ denotes a two-way-infinite path. In these spanning subgraphs, we can find hamiltonian paths with aligned sequences of edges, see Figure \ref{aligned+volt}. Finally, we slightly modify the hamiltonian paths to ensure that all cycles corresponding to jumping edges have non-trivial voltages.

In order for this last step to work, we need some control over the generators corresponding to the various edges in our grids. In \Cref{goodgrids} we define precisely what this means and show that we can always find spanning grids that meet our requirements. 

\begin{figure}%[H]
\centering
\subfloat[\centering ]{{\includegraphics[scale=0.50]{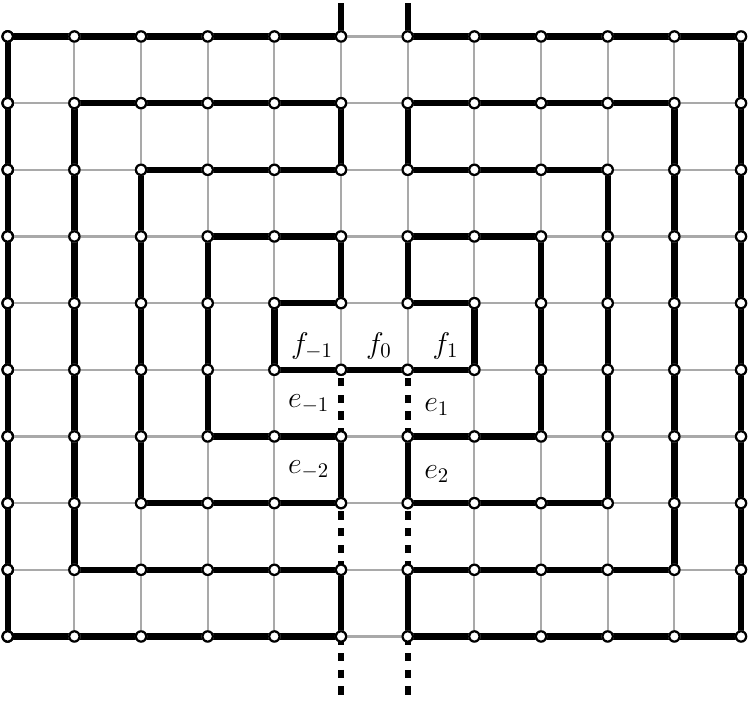}}}
\qquad
\subfloat[\centering ]{{\includegraphics[scale=0.50]{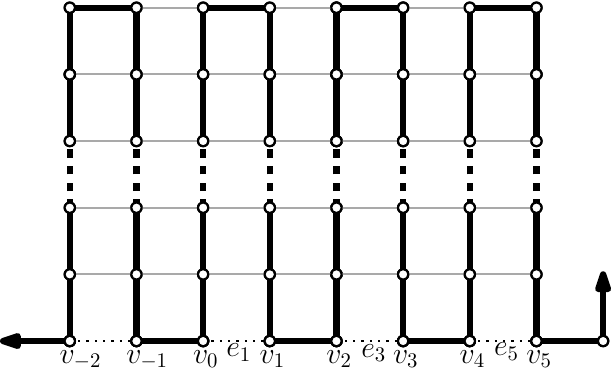} }}%
\caption{Two-way hamiltonian path and jumping edge $ e_i $ in $\pp$ and $\np$}
\label{aligned+volt}
\end{figure}

\subsection{Constructing spanning grids}
\label{goodgrids}

The goal of this section is to find useful spanning substructures in Cayley graphs of abelian groups. Later we will apply the results to the group $G/G'$, but throughout this subsection, we let $G$ be an abelian group with generating set $S$.

We start by describing the substructures we aim to find. 
Let $P_n$ denote the path of length $n$ and let $P_{\infty}$ denote the two-way infinite path.
%\todo[color=green]{$P_n$ and $P_{\infty}$ are defined already.}
%\todo{FL:Doesn't hurt to remind the reader}
A \defin{spanning grid} of $\cay (G;S)$ is a subgraph of $\cay(G;S)$ isomorphic to $P_{m} \cartprod P_n $ containing all vertices of $\cay (G;S)$ where $m,n \in \mathbb N \cup \infty$. We refer to the edges of the grid corresponding to $P_{m}$ as \defin{horizontal} edges and the edges corresponding to edges of $P_n$ as \defin{vertical} edges.

Recall that edges in a Cayley graph can be labelled by the corresponding generators. Call a grid in $\cay (G;S)$ \defin{consistently labelled} if all horizontal edges in the same column have the same label, and all vertical edges in the same row have the same label. We say that a consistently labelled grid has an \defin{$X$-component} for $X \subseteq S$ if either all horizontal edges and none of the vertical edges are labelled with elements of $X$, or all vertical and none of the horizontal edges are labelled $X$. We call $X$-components finite or infinite depending on whether the corresponding factor in the Cartesian product is finite or infinite. To simpify notation, we write $x$-component instead of $\{x\}$-component and $x$-$y$-component instead of $\{x,y\}$-component. Finally, let us call a path of length $3$ a \defin{useful $x$-triple}, if its first and last edges correspond to generators $x^{\pm 1}$ and $x^{\pm 1}$, and a \defin{strongly useful $x$-triple} if one of them corresponds to $x$ and the other to $x^{-1}$.
%\todo[inline,color=green]{Maybe we say that by a hamiltonian path we mean either  a two-way hamiltonian path or standard finite hp?What do you think?}

In what follows, by a hamiltonian path we mean either a two-way infinite hamiltonian path or (in case the corresponding graph is finite) a standard finite hamiltonian path.
\begin{lem}
\label{spanning-single}
Let $G$ be an abelian group with a finite generating set $S$.
For every $x \in S$ the Cayley graph $\cay (G,S)$ contains one of the following: 
\begin{itemize}
    \item a hamiltonian path all of whose edges correspond to the generator $x$, or
    \item a consistently labelled spanning grid with an $x$-component. 
\end{itemize} 
\end{lem}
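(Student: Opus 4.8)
The plan is to branch on whether the cyclic subgroup $H = \langle x\rangle$ equals $G$, and in the case $H \subsetneq G$ to build the grid by lifting a hamiltonian path of the quotient $G/H$. Throughout I use that $G$ is abelian in an essential way.

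First I would dispose of the case $H = G$. Then $G$ is cyclic and generated by $x$, so the edges labelled $x$ already form a connected subgraph on every vertex of $\cay(G;S)$: if $x$ has infinite order this subgraph is a two-way infinite path, and if $x$ has finite order it is a single cycle, from which deleting one edge produces a hamiltonian path all of whose edges correspond to $x$. This realizes the first alternative of the lemma.

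For the main case $H \subsetneq G$, the governing idea is that the $x$-edges already partition $G$ into cosets of $H$, each of which is an $x$-path (when $x$ has infinite order) or an $x$-cycle (when $x$ has finite order), and I want to stack these cosets along a hamiltonian path of the quotient. Concretely, the quotient $G/H$ is a finitely generated abelian group generated by the nontrivial images of $S$; note $xH$ is trivial here, so every generator appearing in the quotient is the image of some $s \in S$ with $s \notin H$. By \cref{abelian_ham}, $\cay(G/H; \cdot)$ has a hamiltonian path $\ldots, w_{-1}H, w_0H, w_1H, \ldots$ (two-way infinite, or finite if $G/H$ is finite). Recording the generator $y_j \in S$ used at each step, so that $w_{j+1}H = y_j w_j H$, I lift this path by choosing representatives $w_{j+1} = y_j w_j$; the cosets $C_j = w_j H$ then run over every coset of $H$ exactly once.

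Then I would form the candidate grid whose vertex set is $\bigcup_j C_j = G$, taking as horizontal edges the $x$-edges inside each $C_j$ (and, when $x$ has finite order, deleting in each coset the single wrap-around $x$-edge in the same position relative to $w_j$, so that every coset becomes a path of the same length), and as vertical edges those joining $w_j x^t$ to $w_{j+1} x^t$. The crucial point, and the only place commutativity is used, is that $y_j(w_j x^t) = (y_j w_j) x^t = w_{j+1} x^t$, so multiplication by $y_j$ carries $C_j$ bijectively onto $C_{j+1}$ while preserving the $x$-coordinate $t$; hence each vertical edge is a genuine edge of $\cay(G;S)$ labelled $y_j$ and it connects the $t$-th vertex of $C_j$ to the $t$-th vertex of $C_{j+1}$. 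Identifying $w_j x^t$ with the pair $(t,j)$ exhibits this subgraph as $P_m \cartprod P_n$, with $m$ the length of the $x$-path and $n$ the length of the quotient path; all horizontal edges are labelled $x$, and all vertical edges across a fixed transition $j \to j+1$ share the label $y_j \notin H$, so no vertical edge is labelled $x$. This is precisely a consistently labelled spanning grid with an $x$-component, the second alternative.

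I expect the main obstacle to be the bookkeeping that certifies the lifted object is literally a Cartesian product which is simultaneously consistently labelled and spanning: one must verify that the vertical translations respect the horizontal $x$-structure (this is exactly where abelianness is indispensable, since for non-commuting $y_j$ and $x$ the translate of an $x$-path need not be an $x$-path), that cutting each $x$-cycle in the same relative position when $x$ has finite order leaves a clean copy of $P_m$ in every coset at once, and that the cosets exhaust $V(\cay(G;S))$ because the quotient path visits every vertex of $G/H$. Everything beyond these checks is routine.
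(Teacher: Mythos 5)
Your proof is correct and follows essentially the same route as the paper: split on whether $\langle x\rangle = G$, otherwise take a hamiltonian path in $\cay(G/\langle x\rangle;\cdot)$ via \cref{abelian_ham}, lift it, and join copies by $x$-edges to form the consistently labelled spanning grid. Your description via cosets $w_j\langle x\rangle$ stacked along the lifted path is just the transposed view of the paper's translates $x^i\gamma$, so the constructed grid is the same object, with your write-up merely spelling out the bookkeeping (e.g.\ deleting the wrap-around $x$-edges when $x$ has finite order) more explicitly.
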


\begin{proof}
    If $G= \langle x \rangle$, then $G$ is cyclic and the edges corresponding to $x$ form a hamiltonian path. If $G \neq \langle x \rangle$, then we can find a hamiltonian path in the Cayley graph of $G/ \langle x \rangle$ by \Cref{abelian_ham}. Let $\gamma$ be a lift of this hamiltonian path in $\cay(G;S)$.
%\todo[inline,color=green]{BM: I think, we need argue with the index instead of the order of $x$. Because $x$ can have an infinite order, and there is an $i$ such that $x^i\in \gamma$. Let me know what you think?}

    If the index of $\langle x \rangle$ in $G$ is infinite, then $x^i \gamma$ is also a lift of the hamiltonian path, and all these lifts are disjoint (compare \Cref{|N|-liftings}). By adding all edges labelled $x$ to the union of all of these lifts we obtain a consistently labelled spanning grid with an infinite $x$-component.

    If $\langle x \rangle$ has finite index $k$, then $x^i \gamma$ is a lift of the hamiltonian path for $0 \leq i < k$. By adding all edges labelled $x$ which connect $x^i \gamma$ to $x^{i+1} \gamma$ for $0 \leq i < k-1$, we obtain a consistently labelled spanning grid with a finite $x$-component.
\end{proof}

\begin{lem}
    \label{usefultriples}
    Let $G$ be a (finite or infinite) abelian group with a generating set $S$, let $x \in S$, and assume that $S$ contains another element $y \neq x^{\pm 1}$. Then $\cay(G;S)$ contains a hamiltonian path containing a useful $x$-triple. If $\langle x \rangle$ is finite, we can find a hamiltonian path containing a strongly useful $x$-triple.
\end{lem}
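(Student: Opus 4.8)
The plan is to apply \Cref{spanning-single} to $x$ and analyse its two possible outputs. Since a strongly useful $x$-triple is in particular useful, it suffices to produce a useful triple in general and to upgrade it to a strongly useful one precisely when $\langle x\rangle$ is finite. The engine in both cases will be to read off a length-$3$ subpath of an explicit hamiltonian path whose first and last edges carry the label $x^{\pm1}$.

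Suppose first that \Cref{spanning-single} returns a consistently labelled spanning grid with an $x$-component, say with the $x$-edges horizontal. This output occurs only when $G\neq\langle x\rangle$, so the vertical direction spans at least two rows, while the horizontal ($x$) direction has $\lvert\langle x\rangle\rvert\geq 2$ vertices. If the $x$-direction is finite I would run the boustrophedon (snake) path along it: traverse a row of $x$-edges, take one vertical step labelled by some non-$x$ generator $z$, reverse and traverse the next row, and so on. At every turn the path reads $\cdot\xrightarrow{x}\cdot\xrightarrow{z}\cdot\xrightarrow{x^{-1}}\cdot$ (the horizontal direction reverses), which is a strongly useful $x$-triple. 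If instead the $x$-direction is infinite, I cannot snake along it, so I would confine the snake to a width-two window of columns, producing the same $x,z,x^{-1}$ pattern, and cover the two remaining quarter-strips (or quarter-planes, if the vertical direction is also infinite) by one-way hamiltonian rays attached at the appropriate corners. Either way the grid output yields a strongly useful $x$-triple, which settles it.

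It remains to treat the output where the $x$-edges already form a hamiltonian path, which by the proof of \Cref{spanning-single} happens exactly when $G=\langle x\rangle$ is cyclic. If $\langle x\rangle$ is infinite then $G\cong\Z$, and any three consecutive $x$-edges form a useful $x$-triple; since $\langle x\rangle$ is infinite this is all that is required. If $\langle x\rangle$ is finite then $G\cong\Z_m$ and the monochromatic path contains no $x^{-1}$-edge at all, so a strongly useful triple must be built by hand using the second generator $y=x^{k}$ with $k\not\equiv 0,\pm1\pmod m$. Writing $x$ additively as $+1$ and, if necessary, replacing $y$ by $y^{-1}$ so that $2\leq k\leq m-3$, I would take the path
\[0,\,1,\,1+k,\,k,\,k-1,\,\dots,\,2,\,k+2,\,k+3,\,\dots,\,m-1,\]
whose label sequence is $x,\,y,\,x^{-1},\,x^{-1},\dots,x^{-1},\,y,\,x,\dots,x$; it visits each residue once, and its initial segment $0\xrightarrow{x}1\xrightarrow{y}1+k\xrightarrow{x^{-1}}k$ is the desired strongly useful triple. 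The few small orders for which the normalisation $2\le k\le m-3$ is unavailable ($m=4$, handled by the four-vertex path $0,1,3,2$) are dealt with directly, and $m=3$ cannot occur because every nonidentity element of $\Z_3$ equals $x^{\pm1}$, contradicting the existence of $y$.

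The main obstacle is the finite cyclic case $G\cong\Z_m$: \Cref{spanning-single} supplies only the all-$x$ path, which has no reversed $x$-label, so the $x^{-1}$-edge of the strongly useful triple must be manufactured via a $y$-detour while keeping the path hamiltonian, and this needs a little case-checking at small orders. A secondary technical point is the infinite $x$-component case, where snaking directly along the infinite $x$-direction is illegal; closing the seams with quarter-strip rays so that the whole thing remains a single two-way hamiltonian path is where the bookkeeping lives. Everything else amounts to reading labels off a grid snake.
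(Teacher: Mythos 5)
Your argument goes wrong in the grid case with an infinite $x$-component, i.e.\ exactly when $\langle x\rangle$ is infinite and $G\neq\langle x\rangle$. When the $S \setminus \{x^{\pm 1}\}$-component is also infinite (the grid is $P_{\infty}\cartprod P_{\infty}$), the plan ``snake inside a width-two window of columns and cover the remaining pieces by one-way rays attached at the appropriate corners'' cannot be carried out as stated: if the snake traverses the whole window, it is itself a two-way infinite path, so it has no free endpoints to which anything can be attached and the two half-planes outside the window remain uncovered; if the snake traverses only half of the window, the uncovered region is not ``two quarter-planes'' but (for instance) the lower half-plane together with two upper quarter-planes, and one end of the path has already been spent on the snake's infinite tail, so the remainder would have to be exhausted by a single ray --- a nontrivial claim you neither state nor prove. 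Even in the strip case ($P_{\infty}\cartprod P_n$ with the $x$-direction infinite) there is an unaddressed parity obstruction: a $2\times(n+1)$ box is balanced bipartite, so for one parity of $n$ there is no hamiltonian path of the box from the bottom corner adjacent to the left half-strip to a top corner adjacent to the right half-strip, and the routing has to be changed. These points are fixable, but they are precisely the content of the claim, not bookkeeping.

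Note also that in this case the lemma only asks for a \emph{useful} $x$-triple (strong usefulness is required only when $\langle x\rangle$ is finite), so you are overreaching: the observation you yourself make in the infinite cyclic case --- three consecutive edges labelled $x$ already form a useful $x$-triple --- rescues this case cheaply, and this is essentially what the paper does, taking the two-way hamiltonian paths of \Cref{aligned+volt} (with the local modification of \Cref{triple_x} when the finite complementary component has more than one edge) and reading off useful triples. Your remaining cases are correct and match the paper: the finite $x$-component snake is the path of \Cref{aligned+volt}(b), and your explicit $\Z_m$ path $0,1,1+k,k,k-1,\dots,2,k+2,\dots,m-1$ (with the $m=4$ exception handled separately) is a valid and in fact more explicit substitute for the paper's replacement of a subpath $(x^{i+1})$ by $(x,y,x^{-i+1},y)$ in the case $G=\langle x\rangle$.
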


\begin{proof}
    First, assume that $x$ does not generate $G$. By \Cref{spanning-single}, we can find a spanning grid with an $x$-component. If $\langle x \rangle$ is finite, then the $x$-component of this grid is finite, and the hamiltonian path sketched in \Cref{aligned+volt} (b) contains a strongly useful $x$-triple. So we may assume that $\langle x \rangle$ is infinite and thus the $x$-component of the grid is infinite. If the $S \sm \{x^{\pm 1}\}$-component is infinite or only consists of a single edge, then the hamiltonian paths shown in \Cref{aligned+volt} (a) have useful $x$-triples. If the $S \sm \{x^{\pm 1}\}$-component is finite but consists of more than one edge, then we can modify the hamiltonian path sketched in \Cref{aligned+volt} (b) locally to obtain a useful $x$-triple, see \cref{triple_x}.
    \begin{figure}%[H]
        \centering
        \includegraphics[scale=0.5]{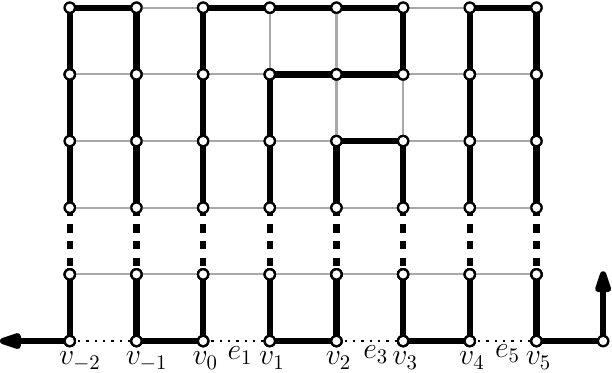}
        \caption{Modifying the the two-way hamiltonian path of \cref{aligned+volt}.}
        \label{triple_x}
    \end{figure}

    Now assume that $G = \langle x \rangle$. Let $i$ be the minimal value such that $x^i = y$. Take the hamiltonian path all of whose edges correspond to $x$. Since $y \neq x^{-1}$ we know that this path contains at least $i+1$ edges. Now replace some subpath of the form $(x^{i+1})$ by $(x,y,x^{-i+1},y)$. It is easy to see that this is again a hamiltonian path, and it clearly contains a strongly useful $x$-triple.
\end{proof}

\begin{lem}
\label{spanning-double}
Let $G$ be an infinite abelian group with a generating set $S$.
For every pair $x, y \in S$ of distinct generators,
if $G\neq\langle x,y\rangle$, then $\cay (G,S)$ contains a consistently labelled spanning grid with one of the following (up to swapping the roles of $x$ and $y$): 
\begin{itemize}
   % \item an $x$ component and a $y$ component, or
    \item an infinite $x$-component and an infinite $S \sm \{x^{\pm 1 }\}$-component with a useful $y$-triple, or
    \item an infinite $x$-component and a finite $S \sm \{x^{\pm 1}\}$-component with a strongly useful $y$-triple, or
    \item a finite $x$-$y$-component and a strongly useful $y$-triple in this $x$-$y$-component.
\end{itemize} 
\end{lem}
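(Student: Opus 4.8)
The plan is to reduce to \Cref{usefultriples}, applied either to a cyclic quotient of $G$ or to the finite subgroup $\langle x,y\rangle$, and then to assemble a spanning grid by the lifting procedure from the proof of \Cref{spanning-single}. The engine throughout is commutativity: if $H\le G$ and $\overline\gamma$ is a hamiltonian path of $\cay(G/H;\overline S)$, then lifting $\overline\gamma$ and taking its translates $h\overline\gamma$ for $h\in H$ yields, by \Cref{|N|-liftings} applied with $N=H$, a vertex partition of $\cay(G;S)$; because $G$ is abelian the label of every edge joining two translates depends only on the step along $\overline\gamma$ and the label of every $H$-internal edge depends only on $h$, which is exactly the condition for the resulting subgraph to be a consistently labelled Cartesian-product spanning grid. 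I will use this remark without further comment.

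I first split on whether $\langle x,y\rangle$ is finite. If it is, both $x$ and $y$ have finite order, and I aim for the third outcome. Apply \Cref{usefultriples} to the finite abelian group $K=\langle x,y\rangle$ with generating set $\{x^{\pm1},y^{\pm1}\}$, taking $y$ as the triple generator and $x$ as the required companion (legitimate since $x\neq y^{\pm1}$); as $\langle y\rangle$ is finite this produces a hamiltonian path $\alpha$ of $\cay(K;\{x,y\})$ containing a strongly useful $y$-triple. Since $G$ is infinite, $G/K$ is infinite abelian, so \Cref{abelian_ham} gives a two-way hamiltonian path of $\cay(G/K;\overline S)$. Using $\alpha$ as one factor and the lift of this path as the other, the remark above yields a consistently labelled spanning grid whose horizontal factor $\alpha$ is a finite $x$-$y$-component — its edges are labelled $x^{\pm1},y^{\pm1}$, while the vertical edges lift edges of $G/K$ and hence carry labels lying outside $\langle x,y\rangle$ — and $\alpha$ contains the desired strongly useful $y$-triple.

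Now suppose $\langle x,y\rangle$ is infinite, so at least one generator has infinite order; relabel so that $x$ has infinite order. If $y\in\langle x\rangle$ then, since $y\neq x^{\pm1}$, we have $y=x^k$ with $|k|\ge 2$, whence $x\notin\langle y\rangle$; interchanging the names of $x$ and $y$ (permitted by the statement) I may therefore assume $x$ has infinite order and $y\notin\langle x\rangle$. Set $H=\langle x\rangle$ and $\overline G=G/H$. From $G\neq\langle x,y\rangle$ we get $\overline G\neq\langle\overline y\rangle$, so $\overline S$ contains a generator distinct from $\overline y^{\pm1}$ and \Cref{usefultriples} applies to $\overline G$ with triple generator $\overline y$, giving a hamiltonian path $\overline\gamma$ of $\cay(\overline G;\overline S)$ with a useful $\overline y$-triple — strongly useful whenever $\langle\overline y\rangle$ is finite, in particular whenever $\overline G$ is finite. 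Lifting $\overline\gamma$ and adjoining all edges labelled $x$, exactly as in \Cref{spanning-single}, gives a consistently labelled spanning grid with an infinite $x$-component whose other factor is $\overline\gamma$, an $S\sm\{x^{\pm1}\}$-component that is infinite or finite according as $\overline G$ is infinite or finite. Lifting preserves edge labels, so the $\overline y$-triple becomes a $y$-triple of the same strength: this is the first outcome when $\overline G$ is infinite and the second when $\overline G$ is finite.

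I expect the main obstacle to be precisely the degenerate alignment $\overline y=1$, i.e.\ $y\in\langle x\rangle$: quotienting out $\langle x\rangle$ then destroys every $y$-labelled edge, so the resulting grid cannot contain any $y$-triple. The observation that resolves this is that $y\in\langle x\rangle$ with $y\neq x^{\pm1}$ and $x$ of infinite order forces $x\notin\langle y\rangle$, so the permitted swap of $x$ and $y$ restores a nontrivial image; one also checks $G/\langle y\rangle\neq\langle\overline x\rangle$, which follows from $G\neq\langle x,y\rangle=\langle x\rangle$. Apart from this, the work is bookkeeping already justified by the commutativity remark: verifying that each assembled configuration is a consistently labelled Cartesian-product spanning grid, and matching the finite/infinite and useful/strongly-useful dichotomies to the correct one of the three listed cases.
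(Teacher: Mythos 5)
Your argument is correct and follows essentially the same route as the paper's proof: the same case split on whether $\langle x,y\rangle$ is finite, the same invocation of \Cref{usefultriples} (applied to $\langle x,y\rangle$ in the finite case and to $G/\langle x\rangle$ in the infinite case, after the same swap of $x$ and $y$ when $y\in\langle x\rangle$), and the same lift-and-connect assembly of the consistently labelled spanning grid. The only differences are presentational, e.g.\ isolating the commutativity/lifting step as a standing remark and explicitly verifying the hypothesis of \Cref{usefultriples} in the quotient.
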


\begin{proof}
    We proceed similarly to the proof of \Cref{spanning-single}. Let $H = \langle x,y \rangle$.  Note that by assumption $G \neq H$. 
    
    Assume first that $H$ is finite. By \Cref{usefultriples}, we can find a hamiltonian path $v_1, \dots, v_n$ in $\cay(H,\{x,y\})$ containing a strongly useful $y$-triple. As in the proof of \Cref{spanning-single}, find a hamiltonian path in the Cayley graph of $G/H$ and let $\gamma$ be a lift of this hamiltonian path in $\cay(G;S)$. Then $v_i \gamma$ is also a lift of this hamiltonian path and the lifts are disjoint. Now adding the edges with label $v_{i+1}v_i^{-1}$ between $v_i \gamma$ and $v_{i+1} \gamma$ for $1 \leq i < n$ gives a consistently labelled spanning grid with a finite $x$-$y$-component and a strongly useful $y$-triple in this $x$-$y$-component.

    Now assume that $H$ is infinite, and thus without loss of generality,  $\langle x \rangle$ is infinite. Further, without loss of generality assume that $y \notin \langle x \rangle$; if this is not the case, then $y$ also has infinite order and it is not hard to see that $x \notin \langle y \rangle$, so we only need to swap the roles of $x$ and $y$.

    By \Cref{usefultriples}, the Cayley graph of $G / \langle x \rangle$ contains a hamiltonian path with a useful $y$-triple, and this useful $y$-triple is strongly useful if $G / \langle x \rangle$ is finite. Like in the previous cases, taking lifts of this path and connecting them by edges corresponding to the generator $x$ like yields the desired grid.
\end{proof}

% \begin{nota}We have the following notations:

% \begin{enumerate}
% \item A path on $n$ vertices is denoted by $P_n$ and $P_{\infty}$ denotes a two-way-infinite path.
% \item A grid corresponds to $\pp$.
% \item A grid of finite width $n$ corresponds to $\np$.
% \item A finite grid  corresponds to $P_n\cartprod P_m$.
% \end{enumerate}
% \noindent The \defin{parameters} of a grid are given by $([A_1]^{\ell_1}, [A_2]^{\ell_2})$, where the labels of the vertical edges are in $A_1$, and the labels of the horizontal edges are in $A_2$ and $\ell_i$ denotes the length and width of the grid.
% \end{nota}

% \begin{nota}\label{sx}
% Let $G=\langle S\rangle$ be a non-abelian group such that $G'=\Z_p$.
% Let $s,x\in S$ such that $[s,x]\neq 1$.
% \end{nota}
% \noindent We prove the main result of this section across two subsections, based on the size of $S$.

\subsection{Constructing Two-way hamiltonian paths in \texorpdfstring{$G/G'$}{}}

Throughout this section, let $G$ be a finitely generated non-abelian group and let $S$ be a generating set of $G$. Let $G'$ denote the commutator subgroup of $G$ and let $\g = G/G'$ with generating set $\s = \{\overline s \mid s \in S\}$. Let $x,y \in S$ such that $[x,y] \neq 1$.

Our aim in this section is to find a hamiltonian path in $\cay(\g;\s)$ with an aligned sequence $(e_\ell)_{\ell \in \mathbb Z}$ of edges such that the cycle corresponding to each $e_\ell$ has non-trivial voltage. We start with the easy case where $\x = \y$.

\begin{lem}
    \label{align-coincide}
    With the above notation, if $\x = \y$, then $\cay(\g;\s)$ contains a two-way hamiltonian path with an aligned sequence $(e_\ell)_{\ell \in \mathbb Z}$ such that the cycle corresponding to each $e_\ell$ has non-trivial voltage.
\end{lem}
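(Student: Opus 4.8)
The plan is to exploit the fact that, although $\x = \y$, the two generators are distinct in $G$: indeed $x \neq y$ since $[x,y] \neq 1$. Consequently $xy^{-1} \in G'$ and $xy^{-1} \neq 1$. In the multigraph $\cay(\g;\s)$ this means that every edge labelled $\x$ runs parallel to an edge labelled $\y$, and the $2$-cycle formed by such a pair of parallel edges has voltage $xy^{-1} \neq 1$. This is precisely the $k=1$ case of an aligned sequence that is only permitted in multigraphs. So the whole problem reduces to producing a two-way hamiltonian path of $\cay(\g;\s)$ that contains an infinite, suitably spaced family of $\x$-edges to which we can attach parallel $\y$-edges as jumping edges.

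To produce such a path I would apply \Cref{spanning-single} to the single generator $\x$ of $\g$. In the first alternative, $\g = \langle \x\rangle$ is infinite cyclic and the hamiltonian path $\gamma$ consists entirely of $\x$-edges; taking the jumping edges to be the $\y$-edges parallel to the path edges $(v_{2\ell}, v_{2\ell+1})$ gives blocks $[v_{2\ell}, v_{2\ell+1}]$ of length $k=1$ separated by the single gap edges $(v_{2\ell+1}, v_{2\ell+2})$, so the sequence is aligned and each cycle has voltage $xy^{-1}\neq 1$. In the second alternative we obtain a consistently labelled spanning grid with an $\x$-component; here I would take the snake hamiltonian path of Figure \ref{aligned+volt} and again place parallel $\y$-edges on $\x$-edges. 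More generally, a block $[v_j, v_{j+k}]$ closed up by a $\y$-edge requires the path segment from $v_j$ to $v_{j+k}$ to have net displacement $\x$ in $\g$; its lift $\tilde P \in G$ then satisfies $\tilde P \in yG'$, and the voltage of the resulting cycle equals $\tilde P y^{-1}$, a conjugate of an element of $G'$ which is non-trivial precisely when $\tilde P \neq y$. This flexibility (blocks of any length $k$ with net $\x$-displacement and lift $\neq y$) is what I would use to absorb the edges of the grid that are not labelled $\x$.

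The main obstacle is the rigidity of the alignment condition: consecutive blocks must be separated by exactly one edge and together tile the entire path, yet the natural snake hamiltonian path of a grid uses non-$\x$-edges in long runs (whole rows or columns, and the turns between them). The heart of the argument is therefore to choose the hamiltonian path and the block decomposition so that every edge not lying in a block is an isolated gap edge, and in particular to handle the turns of the snake, where I expect to use slightly longer blocks (with $k>1$) whose net displacement is still $\x$ and whose lift avoids $y$, so that the voltage computation above keeps every cycle's voltage non-trivial. Once the tiling is arranged, verifying that the resulting sequence $(e_\ell)_{\ell\in\Z}$ is aligned and that each associated cycle has non-trivial voltage is routine.
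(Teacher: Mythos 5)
Your first case (a hamiltonian path all of whose edges are labelled $\x$) is exactly the paper's argument: treat $\x$ and $\y$ as a pair of parallel edges, take every second parallel $\y$-edge as a jumping edge, and observe that each resulting $2$-cycle has voltage $xy^{-1}\neq 1$ because $y\neq x^{\pm 1}$. The gap is in the grid case of \Cref{spanning-single}. There you change the target: instead of working with the aligned sequence of jumping edges that the standard grid paths of \Cref{aligned+volt} already provide, you want the jumping edges themselves to be parallel $\y$-edges (or longer blocks whose net displacement is $\x$), and you must then re-tile the snake path so that consecutive blocks are separated by exactly one edge, including at the turns. You explicitly leave this construction open (``I expect to use slightly longer blocks\dots''), and you note but do not ensure the non-triviality condition $\tilde P\neq y$ for such a block. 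That tiling is not a routine verification -- it is the entire content of the lemma in this case -- so as written the proposal does not prove the statement in its main case.

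The paper's route sidesteps the tiling problem completely: in the grid case it keeps the hamiltonian path together with its aligned sequence of jumping edges as in \Cref{aligned+volt}, and whenever a cycle $C_\ell$ has trivial voltage it replaces one $\x$-labelled edge of that cycle by its parallel $\y$-mate. This leaves the vertex sequence of the path, and hence the aligned sequence, untouched, and it turns $\vol(C_\ell) = W_1 x W_2 = 1$ into $W_1 y W_2 = W_1 (y x^{-1}) W_1^{-1} \neq 1$, using only that $y \neq x^{\pm 1}$ and that every such cycle meets the $\x$-component of the grid. If you want to stay close to your plan, the analogous local fix is to choose, for each jumping edge that is an $\x$-edge, either that edge or its $\y$-parallel mate (both choices cannot give trivial voltage, since that would force $x=y$); but in either formulation the existence of the aligned sequence should be taken from the standard grid construction rather than rebuilt from scratch, which is the step your proposal leaves unproved.
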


\begin{proof}
    We slightly abuse notation by treating $\x$ and $\y$ as different generators, and instead of each edge labelled $\x$ drawing a pair of parallel edges labelled $\x$ and $\y$, respectively. By \Cref{spanning-single}, the Cayley graph $\cay(\g;\s)$ either has a hamiltonian path all of whose edges are labelled $\x$, or a consistently labelled spanning grid with an $\x$ component.

    In the first case, we can simply let  $(e_\ell)_{\ell \in \mathbb Z}$ consist of every second edge labelled $\y$ along the hamiltonian path. The corresponding cycles have length 2 and labels $x$ and $y$ (or their inverses). Since $y \neq x^{\pm 1}$, the voltage of each such cycle is non-trivial.

    In the second case, take a hamiltonian path in $\cay(\g;\s)$ with an aligned sequence of edges as sketched in \Cref{aligned+volt}. If the voltage of a cycle corresponding to an edge $e_\ell$ is trivial, replace one edge labelled $\x$ in this cycle by its parallel mate with label $\y$ (or vice versa). Since $y \neq x^{\pm 1}$, this changes the voltage of this cycle.
\end{proof}

From now on we will assume that $\x \neq \y$. Hence either $\g = \langle \x, \y \rangle$, or $\cay (\g;\s)$ contains one of the three spanning grids described in \Cref{spanning-double}. 

We first consider the case where the spanning grid is $P_{\infty} \cartprod  P_{\infty}$. We start with the hamiltonian path sketched in \Cref{aligned+volt}(a). Our aim is to recursively modify this hamiltonian path to ensure that the voltages of the cycles corresponding to jumping edges are non-trivial. Since the figure exhibits symmetry, it is enough to define the modifications for the cycles on the right side. 

All our modifications are confined to an `eighth' of the grid. The modification is sketched in \Cref{config}; note that if we take a hamiltonian path which contains every $p_i$ as a subpath, and replace every $p_i$ in \Cref{config} (a) by the corresponding $p_i'$ in \Cref{config} (b), then the result is again a hamiltonian path.

\begin{figure}%[H]
    \centering
    \subfloat[\centering ]{{\includegraphics[scale=0.56]{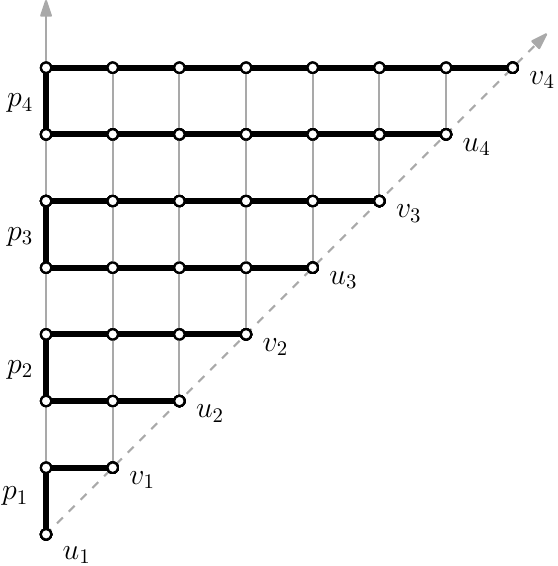} }}%
    \qquad
    \subfloat[\centering ]{{\includegraphics[scale=0.5]{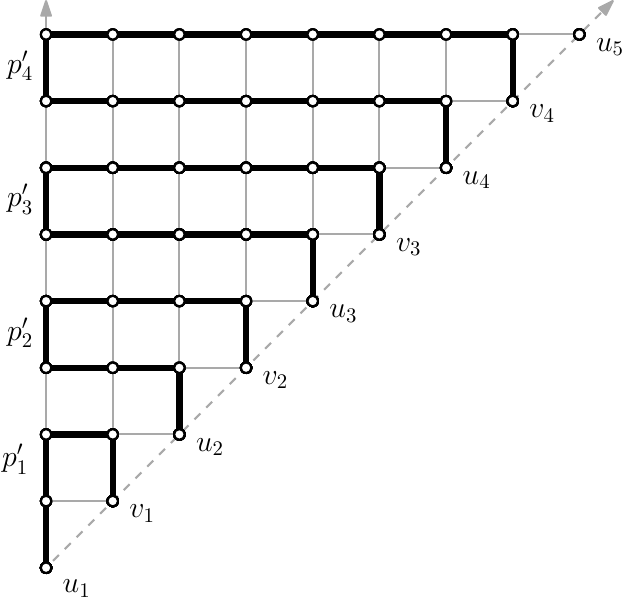} }}%
    \caption{Modifying the paths $p_i$ to obtain $p_i'$s}%
    \label{config}
\end{figure}
% \noindent Thus, we have the following:
% \begin{algorithm}[H]%jgaarevisions
%     \caption{The procedure to make a smaller one-eighth $Y$ of the grid $X$}
% \begin{algorithmic}[1]
% \Procedure{}{}    
% \State \textbf{Input:} one-eighth $Y$ of  $ X $ with the spanning family $ \{p_i\}_{i=1}^{\infty} $ of disjoint paths.
% \State \textbf{Output:}  one-eighth $Y$ of $ X $ with the spanning family $ \{p_i'\}_{i=1}^{\infty} $ of disjoint paths.
% \EndProcedure
% \end{algorithmic}
% \end{algorithm}

\begin{lem}
    \label{align-fullgrid}
    Let $\Gamma$ be a consistently labelled spanning grid of $\cay (\g;\s)$ with an infinite $\x$-component and an infinite $\s \sm \{\x^{\pm 1}\}$-component containing a useful $\y$-triple. Then $\cay (\g;\s)$ contains a two-way hamiltonian path with an aligned sequence $(e_\ell)_{\ell \in \mathbb Z}$ such that the cycle corresponding to each $e_\ell$ has non-trivial voltage.
\end{lem}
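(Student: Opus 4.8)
The plan is to start from the explicit two-way hamiltonian path of $\pp$ together with its aligned sequence of jumping edges sketched in \Cref{aligned+volt}(a), and then to correct the voltages one cycle at a time using the useful $\y$-triple provided by the hypothesis. I would fix grid coordinates on $\Gamma$ with $\x$ as the horizontal generator, so that by consistency each row of vertical edges carries a single label from $\s\setminus\{\x^{\pm1}\}$ and the useful $\y$-triple occupies three consecutive rows whose outer two edges are labelled $\y^{\pm1}$. The first thing to record is that the whole configuration is invariant under the reflections of the grid, so it suffices to describe the correction for the cycles lying in one eighth of the grid and to propagate it by symmetry; this is exactly what makes the argument purely local and lets us treat the cycles on the right side only.

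The engine of the proof is the local replacement $p_i \rightsquigarrow p_i'$ of \Cref{config}. I would first verify the two structural facts asserted there: (i) replacing any subcollection of the subpaths $p_i$ by the corresponding $p_i'$ again yields a single two-way hamiltonian path, so the choices are mutually independent; and (ii) the jumping edges themselves are never touched, so the aligned sequence $(e_\ell)_{\ell\in\Z}$ survives and each $e_\ell$ still closes up to a well-defined cycle $C_\ell$. The crucial geometric point is (iii): passing from $p_i$ to $p_i'$ reroutes the path so that an $\x$-edge and a $\y^{\pm1}$-edge of the triple are traversed in the opposite order inside $C_\ell$, which is precisely the configuration needed to inject a commutator into the voltage.

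This is where I would carry out the one genuine computation. Writing $C_\ell^{\mathrm{old}}$ and $C_\ell^{\mathrm{new}}$ for the cycle before and after the replacement, they agree outside a short subpath $q$, which $p_i'$ replaces by $q'$ with the same endpoints. The closed difference walk $q(q')^{-1}$ is a loop in the grid, so its voltage $w$ lies in $G'$, and from the explicit local picture of \Cref{config} this $w$ is a conjugate of $[x,y]^{\pm1}$; since $[x,y]\neq1$ and $G'\cong\Z_p$ has prime order, $w\neq1$. A short manipulation gives $\vol(C_\ell^{\mathrm{old}}) = w'\,\vol(C_\ell^{\mathrm{new}})$ for a conjugate $w'$ of $w$ (here $G'$ being normal and of prime order is what keeps nontriviality under conjugation), so the two available voltages differ by a nontrivial element of $\Z_p$ and hence cannot both be trivial. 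In particular, if $\vol(C_\ell^{\mathrm{old}})=1$ then $\vol(C_\ell^{\mathrm{new}})=(w')^{-1}\neq1$.

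The proof then finishes by making, independently for each $\ell$, the choice that produces a nontrivial voltage: keep $p_i$ where $\vol(C_\ell)\neq1$ already, and switch to $p_i'$ otherwise. By the independence in (i), the simultaneous result over all $\ell$ is still one two-way hamiltonian path carrying the same aligned sequence, and by construction every $C_\ell$ now has nontrivial voltage, which by \Cref{bounce-vol} is exactly the assertion that each $e_\ell$ is bouncing. The step I expect to be the real obstacle is (iii) together with the voltage computation: one must set up the coordinates so that each cycle needing a correction genuinely crosses the three special rows of the $\y$-triple, and then confirm that the difference walk encloses precisely one commutator $[x,y]^{\pm1}$ rather than some other, possibly trivial, product. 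The remaining bookkeeping — checking the hamiltonicity and alignment properties and the reduction to an eighth of the grid — should be routine once the local picture of \Cref{config} is pinned down.
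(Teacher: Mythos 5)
Your overall strategy (start from the path of \Cref{aligned+volt}(a), use the local rerouting of \Cref{config} near the useful $\y$-triple, and show that the reroute changes the voltage by a conjugate of $[x,y]^{\pm1}$, so at least one of the two options is non-trivial) is the same as the paper's, and your voltage computation is essentially the one the paper carries out ($\vol(C_{j,j}) = W'^{-1}[r,x]W'$ with $r\in\{y,y^{-1}\}$). The gap is your step (i): the replacements $p_i\rightsquigarrow p_i'$ are \emph{not} mutually independent, and you cannot toggle them one cycle at a time. In a grid, a reroute that changes the order in which an $\x$-edge and a $\y$-edge are traversed necessarily changes the vertex set of the subpath $P_i$ (you cannot revisit the four corners of a unit square in a different order with the same endpoints without a diagonal), so $p_i'$ must absorb vertices from, or cede vertices to, the neighbouring subpaths. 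That is why the paper's \Cref{config} only asserts that replacing \emph{every} $p_i$ by $p_i'$ yields a hamiltonian path, not an arbitrary subcollection, and why its actual procedure is sequential: at step $j$, if $C_{j,j-1}$ has trivial voltage it modifies $P_{i,j-1}$ for \emph{all} $i\ge j$ simultaneously, records the invariants $C_{i,j}=C_{i,i}$ for $i<j$ and ``every vertex of $C_i$ lies in some $C_{i',j}$ with $i'\le i$,'' and then shifts the eighth-grid window (by two or three rows) so that future modifications do not disturb the already-finalized cycles. Your proposal, as written, would produce a subgraph in which some vertices are covered twice and others not at all whenever consecutive cycles receive different choices.

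To repair it you would either have to exhibit a genuinely local voltage-changing detour confined to the vertex set of a single $P_i$ (which the grid structure does not permit), or adopt the paper's recursive scheme with its bookkeeping of which cycles are frozen and where the next modification window sits. The second concern you flag yourself --- ensuring each cycle needing correction actually reaches the rows of the $\y$-triple --- is handled in the paper precisely by this moving window, so it is not separable from the independence issue.
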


\begin{proof}
Let $\gamma$ be the two-way hamiltonian path in \Cref{aligned+volt}(a), and assume that all vertical edges are labelled $\x$, and that the three horizontal edges $f_{-1}, f_0, f_1$ connecting the central columns form a useful $\y$-triple. Our goal is to modify this hamiltonian path so that the voltage of the cycle corresponding to each jumping edge is non-trivial. To this end, we recursively apply the modification described above. 
We notice again that it suffices to only consider the right half-grid; the left half follows by symmetry.

Let us denote the $i$-th jumping edge by $e_i$, let $P_i$ be the subpath of $\gamma$ connecting the two endpoints of $e_i$, and let $C_i$ be the cycle consisting of $P_i$ and $e_i$. Let $P_{i,j}$ and $C_{i,j}$ be the corresponding path and cycle after $j$ steps of our modification procedure.
We will ensure that $\vol(C_{j,j}) \neq 1$, and that $C_{i,j} = C_{i,i}$ for every $j > i$. Moreover, our procedure ensures that for every $j$, each vertex of $C_i$ is contained in $C_{i',j}$ for some $i' \leq i$.

Before showing how to ensure these properties, we show that they allow us to define the desired hamiltonian path. Let $\gamma'$ be obtained from $\gamma$ by replacing every $P_i$ by $P_{i,i}$. Then $\gamma'$ is connected because $\gamma$ was connected, and every $P_{i,i}$ is connected. Moreover $\gamma'$ contains every vertex of each $P_i$, because each such vertex is contained in $P_{i',i} = P_{i',i'}$ for some $i' \leq i$. Next note that every vertex has degree $2$ in $\gamma$ because in order to have larger degree, a vertex would have to be contained in $P_{i,i}$ and $P_{i',i'}$ for $i > i'$, and thus have degree larger than $2$ in the $i$-th iteration. Hence $\gamma'$ is a hamiltonian path. The $e_i$ are still jumping edges for $\gamma'$, and $\vol(C_{i,i}) \neq 1$ by construction.

It remains to show that we can guarantee the claimed properties after each iteration. The properties clearly hold after $0$ iterations, as there is no $C_{i,i}$ whose voltage needs to be non-trivial. Inductively assume that they hold after $j-1$ iterations for some $j > 0$.

If $\vol(C_{j,j-1}) \neq 1$, we can set $C_{i,j} = C_{i,j-1}$, and all properties are satisfied after the $j$-th iteration. In preparation of the $(j+1)$-th step, we shift the eighth grid in which future modifications are to take place up by two steps, see \cref{oneeighth}.

\begin{figure}%[H]
    \centering
    \subfloat[\centering ]{{\includegraphics[scale=0.62]{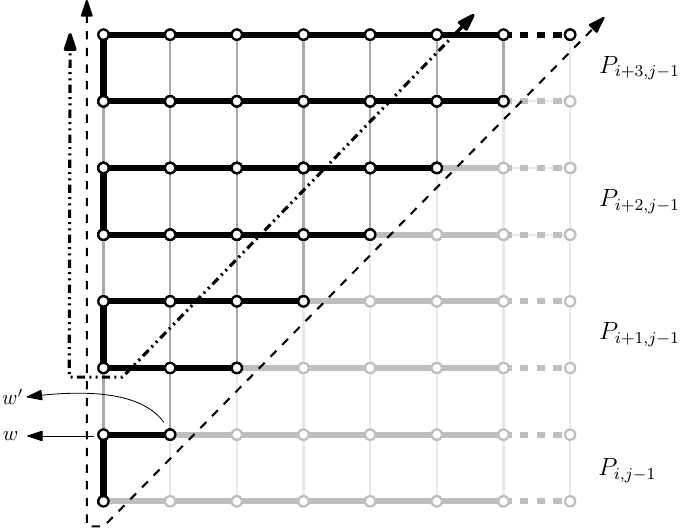} }}%
    \qquad
    \subfloat[\centering ]{{\includegraphics[scale=0.62]{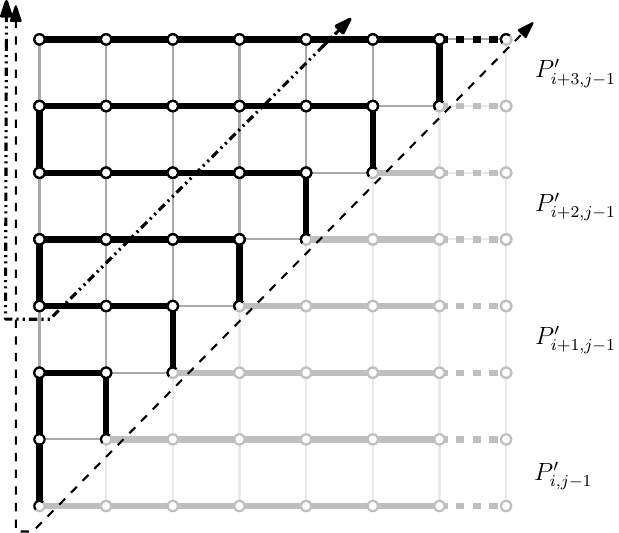} }}%
    \qquad
    %\subfloat[\centering ]{{\includegraphics[scale=0.4]{figs/00-2.pdf} }}%
    \caption{Modifying the paths $P_{i,j-1}$ to obtain paths $P_{i,j-1}'$.}
    \label{oneeighth}
\end{figure}

So we may assume that the voltage of $C_{j,j-1}$ is trivial.
We set $P_{i,j} =  P_{i,j-1}$ for $0<i<j$, and $P_{i,j} =  P_{i,j-1}'$ as illustrated in \Cref{oneeighth} (b) for $i \geq j$. 
Finally, in preparation of the $(j+1)$-th step, we shift the eighth grid in which future modifications are to take place up by three steps, see \cref{oneeighth}.

For $i<j$ it holds that $C_{i,j} =  C_{i,j-1} = C_{i,i}$ by definition and the induction hypothesis. Further note that for arbitrary $i$, each vertex of $C_{i,j-1}$ is either contained in $C_{i,j}$ or in $C_{i-1,j}$, and thus by the induction hypothesis, each vertex of $C_i$ is contained in $C_{i',j}$ for some $i' \leq i$ as claimed.

It only remains to show that the voltage of $C_{j,j}$ is not trivial. Let $W$ denote the product of the labels along the subpath of $C_{j,j}$ from the initial vertex of $e_j$ to $w'$, as shown in \Cref{oneeighth} (a). Similarly, let $W'$ be the product of the labels along $C_{j,j}$ from $w$, also depicted in \Cref{oneeighth} (a), to the initial vertex of $e_i$. Let $r \in \{y,y^{-1}\}$ be the generator corresponding to the edge from $w$ to $w'$.
Clearly, $\vol(C_{j,j-1})= Wr^{-1}W' = 1$ which implies that $r=W'W$. 
\[
\vol(C_{j,j})=Wxr^{-1}x^{-1}W' = W'^{-1} W' Wxr^{-1}x^{-1}W' = W'^{-1} [r,x] W'
\]
Which is non-trivial because $[r,x] \neq 1$
%
%
% \noindent Next assume that $P_j$ is defined for all $j\leq n$ and we now want to define $P_{n+1}$. If the voltage of $Q_i+e_i$ is not trivial, we set $P_{n+1}\coloneqq Q_{n+1}$.
% So we assume that the voltage of $Q_i+e_i$ is trivial.
% We note that we always can find one eight of a grid say $Z$ containing subpaths of $Q_j$s, where $j\geq n+1$, see \Cref{oneeighth}(a).
% We next apply Procedure 1 on $Z$ and subpaths of $Q_i$'s belonging to $Z$, we modify all $Q_i$s  and update them i.e. ($Q_i\xleftarrow{}$ the modified path), where $i\geq n+1$. We note that all new subpaths lie in a subset of $Z$, see \Cref{oneeighth}(b).
%
% \noindent With a similar argument of Claim 13, it can be shown that $ \vol(P_{n+1}+e_{n+1}) = 1$, leads to $ [x,r] = 1 $, a contradiction, where $r\in \{y,y^{-1}\}$.
% Thus, we recursively constructed $ Q_is $ such that $ \gamma = \bigcup_{i\in \mathbb{Z}\sm \{0\}} Q_i $ aligned with the sequence $ (e_{\ell})_{\ell \in \mathbb{Z}\sm \{0\}} $, as desired.
\end{proof}

\begin{lem}
    \label{align-finitegrid}
    Let $\Gamma$ be a consistently labelled spanning grid of $\cay (\g;\s)$ with 
    \begin{itemize}
    \item an infinite $x$-component and a finite $S \setminus \{x\}$-component with a strongly useful $y$-triple, or
    \item a finite $x$-$y$-component and a strongly useful $y$-triple in this $x$-$y$-component.
    \end{itemize}
    Then $\cay (\g;\s)$ contains a two-way hamiltonian path with an aligned sequence $(e_\ell)_{\ell \in \mathbb Z}$ such that the cycle corresponding to each $e_\ell$ has non-trivial voltage.
\end{lem}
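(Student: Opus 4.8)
The plan is to mirror the strategy of \Cref{align-fullgrid}: start from the natural ``boustrophedon'' two-way hamiltonian path $\gamma$ in the spanning grid $\mmp$, read off an aligned sequence of jumping edges whose cycles are long rectangles, and then repair those cycles whose voltage happens to be trivial by a local modification built around the strongly useful $\y$-triple. Concretely, let $\gamma$ sweep the \emph{finite} factor completely before taking a single step in the infinite direction, reversing direction at alternate ends, exactly as in \Cref{aligned+volt}(b). Write $W$ for the product of the labels read off along one full sweep of the finite factor. As the aligned sequence I take, for each $\ell$, the infinite-direction edge $e_\ell$ left unused when the two consecutive passes at infinite-levels $2\ell$ and $2\ell+1$ are joined (the snake steps at the far end of the finite factor, so the edge at the near end is free). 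One checks directly that these $e_\ell$ are end-to-start consecutive along $\gamma$, so the sequence is genuinely aligned, and that each cycle $C_\ell$ is the rectangle consisting of $e_\ell$ together with the two full sweeps.

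For the first bullet (infinite $\x$-component, finite $\s\sm\{\x^{\pm1}\}$-component) the situation is essentially that of \Cref{align-fullgrid}. Every rectangle has $\vol(C_\ell) = [W,x]$, where $x$ is the infinite-direction generator; since all passes sweep the same finite factor, this single commutator decides every cycle at once. If $[W,x]\neq 1$ we are done. Otherwise all voltages are trivial, and I repair each $C_\ell$ independently — the repairs live on disjoint infinite-levels and so cannot collide — by rerouting the sweep locally across the strongly useful $\y$-triple as in \Cref{align-fullgrid}; that computation multiplies the voltage by a conjugate of $[r,x]$ with $r\in\{y,y^{-1}\}$, which is non-trivial because $[x,y]\neq 1$. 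Here the triple must be \emph{strongly} useful rather than merely useful precisely because the finite factor is traversed in both directions, so only the simultaneous presence of a $y$- and a $y^{-1}$-edge guarantees that the correction survives as a clean commutator instead of cancelling against the return sweep.

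The second bullet is the real obstacle. Now \emph{both} $x$ and $y$ lie in the finite factor, while the infinite direction is generated by some $z\in S\sm\{x^{\pm1},y^{\pm1}\}$ about which we know nothing; $z$ may well commute with all of $\langle x,y\rangle$ (for instance if $G$ splits as $\langle x,y\rangle\times\Z$). Consequently every cycle confined to the grid has voltage of the form $[W,z]$, and these can all be trivial at once, so no modification living inside the grid can help. The way out is to leave the grid: within each coset of $\overline H=\langle\x,\y\rangle$ the induced subgraph of $\cay(\g;\s)$ is the full Cayley graph $\cay(\overline H,\{\x,\y\})$, and since $\overline H$ is abelian the relation $\x\y=\y\x$ produces a genuine $4$-cycle of $\cay(\g;\s)$ with label sequence $x,y,x^{-1},y^{-1}$ and hence voltage $[x,y]\neq 1$. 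The plan is to attach such a square at each strongly useful $\y$-triple: the $y$- and $y^{-1}$-edges of the triple, together with a single $x$-labelled chord wrapped around them, bound a $4$-cycle of voltage $[y,x]^{\pm1}$, and routing $\gamma$ through this square (using the chord as the jumping edge) yields a cycle of non-trivial voltage while keeping $\gamma$ hamiltonian and the sequence aligned. Because $G'\cong\Z_p$ is generated by $[x,y]$, realising this one commutator is all that is required; there is no target to match beyond non-triviality.

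The hardest point, and the one demanding the most care, is the bookkeeping for the second bullet. One must verify that rerouting $\gamma$ through the extra chord vertices still visits every vertex of the coset exactly once, that the chosen chords inherit the nesting demanded by the definition of an aligned sequence, and that these local insertions can be performed at all infinitely many levels simultaneously without two of them interfering. Once this is checked, non-triviality of every voltage is immediate from $[x,y]\neq 1$, and combining the two bullets completes the proof.
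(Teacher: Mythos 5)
Your overall architecture (snake path in $\mmp$ as in \cref{aligned+volt}(b), long rectangular cycles $C_\ell$, local repair at the strongly useful $\y$-triple) matches the paper, and your first bullet is essentially the paper's recursion — though your claim that the repairs ``cannot collide'' because they live on disjoint levels is too quick: the repair necessarily transfers vertices between consecutive cycles, which is exactly why the paper tracks the invariant that every vertex of $C_i$ ends up in some $C_{i',j}$ with $i'\in\{i-1,i,i+1\}$ and modifies $P_{j}$ and $P_{j+1}$ together. That is a fixable bookkeeping omission.

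The genuine gap is in the second bullet. You correctly identify the right object — the $4$-cycle $u_0\xrightarrow{y}u_1\xrightarrow{x}u_2\xrightarrow{y^{-1}}u_3\xrightarrow{x^{-1}}u_0$ spanned by the strongly useful $\y$-triple and the chord $(u_0,u_3)$, which has voltage conjugate to $[x,y]\neq 1$ — but you deploy it wrongly by proposing to ``use the chord as the jumping edge.'' The definition of an aligned sequence forces $x_{\ell+1}=v_{j+k+1}$, i.e.\ consecutive cycles must tile the hamiltonian path with single-edge gaps (this is what makes the blocks partition $V(X)$ later). If some $C_\ell$ is a $4$-cycle around a triple, then \emph{every} $C_\ell$ must be such a short cycle, which is impossible since there is only one triple per copy of the finite factor; and if the long rectangles remain in the sequence, their voltages $[W,z]$ (with $z$ the infinite-direction generator, about which nothing is known) are never repaired — your own observation that these can all be trivial at once then kills the argument. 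The paper's resolution is to keep the original jumping edges and the long rectangles as the $C_\ell$, and to use the square only as a \emph{detour}: replace the subpath $y,x,y^{-1}$ of one sweep by the chord $x$ (or vice versa), handing the two bypassed vertices to the neighbouring cycle. This multiplies $\vol(C_{j,j-1})=1$ by a conjugate of $[x,y]^{\pm1}$, yielding $\vol(C_{j,j})\neq 1$ without touching the aligned sequence. Your proof as written does not reach this step, and the ``hardest point'' you defer is precisely where it breaks.
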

%55\todo{adapt proof from Section 4.3.2}
%
\begin{proof}Let $\gamma$ be the two-way hamiltonian path sketched in \Cref{aligned+volt} (b). Like in the proof of \Cref{align-fullgrid}, our goal is to recursively modify this hamiltonian path so that the voltage of the cycle corresponding to each jumping edge is non-trivial, and like in this proof (by symmetry) it is enough to consider jumping edges $e_i$ for $i > 0$. We use the same notation as in the proof of \Cref{align-fullgrid}: let $e_i$ be the $i$-th jumping edge, let $P_i$ be the subpath of $\gamma$ connecting the endpoints of $e_i$, let $C_i$ be the cycle consisting of $P_i$ and $e_i$, and let $P_{i,j}$ and $C_{i,j}$ be the corresponding path and cycle after $j$ modification steps.

Let $f_0, f_1, f_2$ be the set of edges of a strongly useful $\y$-triple in the finite factor of the grid. The modification step is sketched in \Cref{finitewidth}. 

\begin{figure}%[H]
    \centering
    \subfloat[\centering ]{{\includegraphics[scale=0.7]{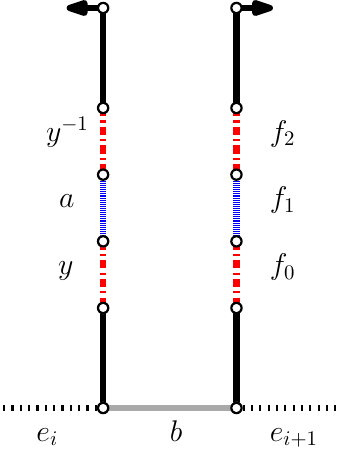} }}%
    \qquad
        \subfloat[\centering ]{{\includegraphics[scale=0.7]{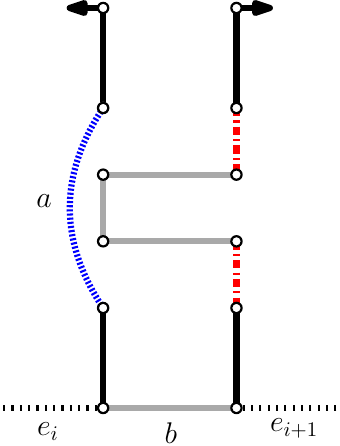} }}%
    \qquad
        \subfloat[\centering ]{{\includegraphics[scale=0.7]{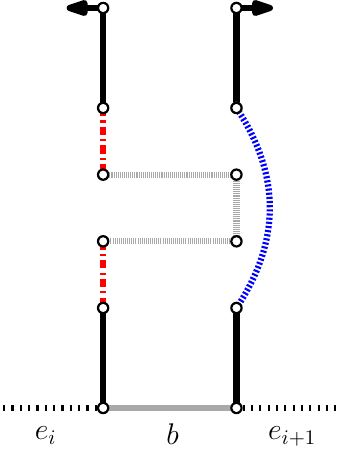} }}%

    \caption{Modifying $C_{j,j-1}$ by adding or removing a square. (instead of $e_i$ it should be $e_j$ to match the description in the text)}
    \label{finitewidth}
\end{figure}

Similarly to the proof of \Cref{align-fullgrid}, we ensure that $\vol(C_{j,j}) \neq 1$, and $C_{i,j} = C_{i,i}$ for every $i > j$. Moreover, we will ensure that all vertices of $C_i$ are contained in $C_{i',j}$ for $i' \in \{i-1,i,i+1\}$, and the same argument as in this proof then shows that replacing every $P_i$ by $P_{i,i}$ yields the desired hamiltonian path.

Clearly the properties hold before the first iteration, as there is no $C_{i,i}$ whose voltage needs to be non-trivial. Inductively assume that they hold after $j-1$ iterations for some $j > 0$.

If $\vol(C_{j,j-1}) \neq 1$, we can set $C_{i,j} = C_{i,j-1}$, and all properties are satisfied after the $j$-th iteration. So assume that $C_{j,j-1}$ has trivial voltage.

For $i \notin \{j,j+1\}$, we set $P_{i,j} = P_{i,j-1}$. If $\Gamma$ has a finite $\x$-$\y$-component, then obtain $P_{j,j}$ and $P_{j+1,j}$ by replace the parts of $P_{j,j-1}$ and $P_{j+1,j-1}$ shown in \Cref{finitewidth} (a) by the ones shown in \Cref{finitewidth} (b). If $\Gamma$ has an infinite $\x$-component, then we replace the parts of $P_{j,j-1}$ and $P_{j+1,j-1}$ shown in \Cref{finitewidth} (a) by the ones shown in \Cref{finitewidth} (c). In both cases it can be shown (analogously to the last part of the proof of \Cref{align-fullgrid}) that $\vol(C_{j,j})$ is conjugate to $[x,y]$, and thus non-trivial.
\end{proof}
\begin{nota}[cf.~{\cite[\S2.1]{WitteGallian-survey}}]
For $v \in G$ and an infinite sequence $(s)=(\ldots,s_{-2},s_{-1},s_1, s_2, \ldots )$ of elements of $S^{\pm 1}$we define a walk $v(s)$ in $\cay(G;S)$ that visits the vertices:
\[\dots, vs_{-1}^{-1} s_{-2}^{-1}, vs_{-1}^{-1} , v, vs_1, vs_1s_2,\dots \]
Moreover, we use $(\ldots a,s^k,b,\ldots)$ as a shorthand for $(\ldots,a,\underbrace{s,\ldots,s}_{k},b,\ldots)$.
\end{nota}

\begin{lem}
\label{align-only_xy}
%Let $\Gamma$ be a consistently labelled spanning grid of $\cay (\g;\s)$ with an $\x$-component and a $\y$-component. 
If $\g=\langle \x,\y\rangle$, then $\cay (\g;\s)$ contains a two-way hamiltonian path with an aligned sequence $(e_\ell)_{\ell \in \mathbb Z}$ such that the cycle corresponding to each $e_\ell$ has non-trivial voltage.
\end{lem}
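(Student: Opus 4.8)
\textbf{The plan is to} reduce the two-generator case $\g = \langle \x, \y \rangle$ to the cases already handled by constructing an explicit spanning grid by hand, exploiting that $\g$ is an infinite abelian group generated by exactly two elements. Since $\g$ is infinite and abelian, it is isomorphic either to $\Z$ (if one generator is a power of the other, up to torsion) or to $\Z^2$, $\Z \times \Z_k$, or a similar two-generator abelian group. I would first split according to the orders of $\x$ and $\y$ and their interaction. The key observation is that $\cay(\g;\s)$ with $\g = \langle \x, \y\rangle$ already \emph{is} essentially a grid-like structure: the relation $\x\y = \y\x$ in $\g$ means that products $\x^a \y^b$ organize the group as a quotient of $\Z^2$, so the Cayley graph naturally carries horizontal and vertical directions with $\x$- and $\y$-edges.

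\textbf{First I would} dispose of the degenerate subcase where one of $\x, \y$ lies in the cyclic group generated by the other (or where $\langle \x \rangle$ or $\langle \y \rangle$ has finite index), since then the whole group is close to cyclic and I can build a spanning grid directly: take the hamiltonian path along one generator and take lifts translated by the other, connecting consecutive lifts by edges of the second generator, exactly as in the proofs of \Cref{spanning-single} and \Cref{spanning-double}. In the main subcase, where $\langle \x \rangle$ is infinite and $\y \notin \langle \x \rangle$, the cosets of $\langle \x \rangle$ are indexed by $\langle \y \rangle / (\langle \x\rangle \cap \langle \y \rangle)$, and I would lay out the Cayley graph as a grid whose columns are cosets of $\langle \x \rangle$ and whose inter-column edges are labelled $\y$. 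This produces a consistently labelled spanning grid with an infinite $\x$-component, where the transverse component is either infinite (giving $\pp$) or finite cyclic (giving $\np$). In either situation I can invoke the constructions of \Cref{align-fullgrid} or \Cref{align-finitegrid} essentially verbatim, the only subtlety being that I must first secure a useful or strongly useful $\y$-triple, which I obtain from \Cref{usefultriples} applied inside the relevant factor.

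\textbf{The main obstacle} I expect is that \Cref{usefultriples} and the grid-construction lemmas were phrased under the standing assumption $G \neq \langle x, y \rangle$ (see the hypotheses of \Cref{spanning-double} and the proviso ``if $G \neq \langle x, y\rangle$''), whereas here the whole group \emph{is} generated by $\x$ and $\y$. So I cannot simply quote those lemmas; I have to re-examine whether a useful or strongly useful $\y$-triple is still available when $\s = \{\x^{\pm 1}, \y^{\pm 1}\}$ contains nothing beyond $\x$ and $\y$. The resolution is to note that a useful $\y$-triple is a length-$3$ path whose first and last edges are $\y^{\pm 1}$; in a single column one can always find $\ldots, \y, \x, \y, \ldots$ (or, when $\langle \y\rangle$ is finite, arrange the first and last edges to be $\y$ and $\y^{-1}$ for a strongly useful triple, since the generator $\x$ is available to serve as the middle edge). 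Thus the triple is constructed directly from the two-generator structure rather than imported. Once the triple is in place, the recursive voltage-correction procedure runs identically: each corrected cycle acquires voltage conjugate to $[\x, \y]$ in $G$, which is non-trivial precisely because we chose $x, y$ with $[x,y] \neq 1$.

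\textbf{To summarize the order of steps:} (i) handle the near-cyclic subcase by direct grid construction; (ii) in the generic subcase, realize $\cay(\g;\s)$ as a spanning grid with an infinite $\x$-component and a transverse component that is either $P_\infty$ or a finite cycle; (iii) manufacture a (strongly) useful $\y$-triple directly, since $S$ need not contain a third generator; (iv) apply the modification scheme of \Cref{align-fullgrid} or \Cref{align-finitegrid} to make every jumping cycle's voltage conjugate to $[\x,\y] \neq 1$. I would expect step (iii) — verifying the triple exists without an auxiliary generator — to be where the argument needs the most care.
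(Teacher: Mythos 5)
Your first case (no power of $\y$ lies in $\langle \x \rangle$, giving a spanning grid $\pp$ with an infinite $\x$-component and an infinite $\y$-component) matches the paper and is fine, since there a merely \emph{useful} $\y$-triple is needed and any three consecutive transverse edges provide one. The gap is in your step (iii) for the remaining subcase, where $\y^n \in \langle \x \rangle$ for some minimal $n$ and the transverse factor of your grid is finite. There, \Cref{align-finitegrid} requires a \emph{strongly} useful $\y$-triple \emph{inside the finite factor}: a path of length $3$ in that factor whose first and last edges carry $\y$ and $\y^{-1}$ respectively. But with $\s = \{\x^{\pm1},\y^{\pm1}\}$ every edge of the finite factor, traversed in a fixed direction, is labelled $\y$; any length-$3$ subpath of it reads $\y,\y,\y$ or $\y^{-1},\y^{-1},\y^{-1}$, never one of each. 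The triple $\y,\x,\y^{-1}$ you propose instead straddles both factors of the grid (it is a $2\times 2$ square minus an edge), so it does not satisfy the hypotheses of \Cref{align-fullgrid} or \Cref{align-finitegrid}, and the recursive rerouting in those proofs is drawn specifically around a triple lying in one factor; you would have to redesign the whole modification procedure, which you have not done. So the claim that the scheme ``runs identically'' is exactly where the argument breaks.

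The paper avoids this entirely by a different, essentially algebraic device in that subcase. Writing $\y^n = \x^{-m}$ with $n>1$ minimal, it exhibits \emph{two} explicit two-way hamiltonian paths with aligned jumping edges (edge-label periods $(x,y^{n-1},x,y^{-n+1})$ and $(y^{n-1},x^{m+1})$; see \Cref{aligned+volt}(b) and \Cref{hamilton+jumping}), whose jumping cycles have voltages $[y^{n-1},x]$ and $x^m y^n$ respectively. If both voltages were trivial, a short computation ($x^{m-1}y^nx = x^{m-1}y^{n-1}xy$, then cancel) forces $xy=yx$, contradicting $[x,y]\neq 1$; hence one of the two ready-made paths already works, with no local voltage-correction needed. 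Note also that a single one of these paths does not suffice: $[y^{n-1},x]$ can be trivial even when $[x,y]$ is not, which is precisely why the second path and the ``at least one of the two voltages is non-trivial'' argument are indispensable. Your proposal contains no counterpart to this step, so as written it does not prove the lemma.
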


\begin{proof}
Since $\g$ is infinite, we know that at least one of $\x$ and $\y$ has infinite order. Assume without loss of generality that the order of $\x$ is infinite. If there is no $n$ such that $\y^n \in \langle \x \rangle$, then $\cay (\g;\s)$ has a consistently labelled spanning grid of with an infinite $\x$-component and an infinite $\y$-component, then \cref{align-fullgrid} finishes the proof.

So assume that there is such an $n$, and let $n\in \mathbb N$ be minimal such that $\y^n \in \langle \x \rangle$. If $n=1$, then $\g = \langle \x \rangle$, so $\g$ is infinite cyclic, and $\y$ generates a subgroup of finite index $>1$. By swapping the roles of $\x$ and $\y$ we may thus assume that $n > 1$. Moreover, we may assume that $\y^n = \x^{-m}$ for some $m \geq 0$, otherwise replace $\x$ by $\x^{-1}$.

We note that $\cay(\g,\{\x,\y\})$ contains two hamiltonian paths whose edge labels are periodic with period $(x,y^{n-1},x,{y^{-n+1}})$ and $(y^{n-1},x^{m+1})$, respectively. Both of these have aligned sequences of jumping edges, see \cref{aligned+volt}(b) and \Cref{hamilton+jumping}.

\begin{figure}%[H]
    \centering
    \includegraphics[scale=0.5]{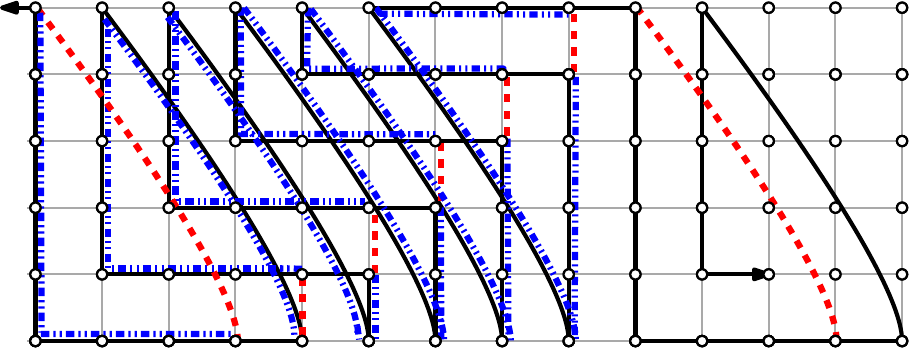}
    \caption{The illustration of a two-way hamiltonian path in $\cay(\g,\{\x,\y\})$ with jumping edges for the case $m=3$ and $n=6$.}
    \label{hamilton+jumping}
\end{figure}

The voltage of the cycles corresponding to jumping edges in \Cref{aligned+volt} (a) is $[y^{n-1},x]$, the voltage of the cycles in \Cref{hamilton+jumping} is $x^m y^n$. If one of these voltages is non-trivial, then we are done.

So assume that both voltages are trivial. From $x^m y^n = 1$ we obtain (by conjugation) $x^{m-1}y^nx = 1$ and $y^{n-1}x^my = 1$. Since $y^{n-1}$ and $x$ commute, we have that $y^{n-1}x^my = x^{m-1}y^{n-1}xy$, and thus
\[
    x^{m-1}y^nx = x^{m-1}y^{n-1}xy.
\]
Cancelling $x^{m-1}y^{n-1}$ on both sides of this equation we get $xy = yx$, contradicting the assumption that $[x,y] \neq 1$.
\end{proof}

% \begin{thm}
% Let $G=\langle S\rangle $ be a group with $G'\cong \Z_p$ and $G$ has one of the following conditions.
% \begin{itemize}
%     \item $S\neq \{x^{\pm },y^{\pm }\}$, or 
%     \item $S= \{x^{\pm },y^{\pm }\}$ and one of the following holds:
%     \begin{itemize}
%     \item  $p\neq 3$ and $G$ is nilpotent or 
%     \item $\g$ is torsion-free.
%     \end{itemize}
% \end{itemize}
% Then $\cay(\g;\s)$ has a two-way hamiltonian path $\gamma$ aligned with sequence $(e_{\ell})_{\ell\in \Z}$ such that $\vol (C_{\ell})\neq 1$  for every $\ell\in\Z$.
% \end{thm}

\begin{thm}\label{always_align}
Let $G = \langle S \rangle$ be a group, let $\g = G/G'$ and let $\s$ be the generating set of $\g$ corresponding to $S$.
Then $\cay(\g;\s)$ has a two-way hamiltonian path $\gamma$ aligned with sequence $(e_{\ell})_{\ell\in \Z}$ such that $\vol (C_{\ell})\neq 1$  for every $\ell\in\Z$.
\end{thm}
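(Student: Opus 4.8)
The plan is to assemble \Cref{always_align} as an immediate case analysis that routes each possible structure of $\g = G/G'$ to one of the four lemmas \Cref{align-coincide}, \Cref{align-fullgrid}, \Cref{align-finitegrid}, and \Cref{align-only_xy} already established in this subsection. The first observation to record is that since $G$ is non-abelian we may fix generators $x, y \in S$ with $[x,y] \neq 1$; this is exactly the standing hypothesis under which the four preceding lemmas were proved, so the same choice of $x,y$ drives the whole argument. If $G$ happens to be abelian, then $G' = 1$, $\g = G$, and the conclusion follows directly from \Cref{abelian_ham} (with an empty or trivially-voltaged aligned sequence), so this degenerate case should be dispatched first.

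First I would split on whether $\x = \y$ in $\g$. If $\x = \y$, then \Cref{align-coincide} delivers the desired two-way hamiltonian path with an aligned sequence of edges whose cycles have non-trivial voltage, and we are done. So assume $\x \neq \y$. Now I would split further on whether $\g = \langle \x, \y \rangle$. If it is, then \Cref{align-only_xy} applies directly and produces the required path. The remaining case is $\x \neq \y$ together with $\g \neq \langle \x, \y \rangle$; here I would invoke \Cref{spanning-double} with the pair $x,y$ (legitimate precisely because $G \neq \langle x, y \rangle$ modulo $G'$ gives $\g \neq \langle \x,\y\rangle$). That lemma guarantees that $\cay(\g;\s)$ contains one of its three listed spanning grids.

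The final step is to match each of the three grid types output by \Cref{spanning-double} to the corresponding lemma. The grid with an infinite $\x$-component and an infinite $\s \sm \{\x^{\pm1}\}$-component carrying a useful $\y$-triple is handled by \Cref{align-fullgrid}. The other two grids --- the infinite $\x$-component with a finite $S \sm \{\x^{\pm1}\}$-component and a strongly useful $\y$-triple, and the finite $\x$-$\y$-component with a strongly useful $\y$-triple --- are exactly the two bullet hypotheses of \Cref{align-finitegrid}, so that lemma closes both. Since $\g$ is infinite (as $G$ is infinite and $G' \cong \Z_p$ is finite, $\g$ has infinite order, and being abelian it is an infinite f.g.\ abelian group), the grids really are spanning two-way-infinite structures and every invoked lemma applies as stated, completing the case analysis.

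I do not expect a genuine mathematical obstacle here, since all the substance lives in the four lemmas; the only care needed is \emph{bookkeeping} to confirm the case split is exhaustive and that the hypotheses of \Cref{spanning-double} and of each target lemma are literally met. The one point deserving a sentence of justification is the reduction allowing us to assume $x$ has infinite order / the swapping-of-roles conventions, but these are internal to \Cref{align-only_xy} and \Cref{spanning-double} and need not be re-proved. In short, the theorem is a dispatcher over the preceding lemmas, and the main (modest) task is verifying exhaustiveness of the $\x = \y$ / $\g = \langle\x,\y\rangle$ / three-grid trichotomy.
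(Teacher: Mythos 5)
Your proposal is correct and follows essentially the same route as the paper: the paper's proof is exactly the dispatcher you describe, sending $\x=\y$ to \Cref{align-coincide} and otherwise using \Cref{spanning-double} together with \Cref{align-fullgrid}, \Cref{align-finitegrid}, and \Cref{align-only_xy}. Your version merely spells out the exhaustiveness of the case split (and the trivial abelian case) in more detail than the paper does.
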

\begin{proof}
    If $\x = \y$, then \Cref{align-coincide} applies. Otherwise, by \Cref{spanning-double} above, one of \Cref{align-fullgrid}, \Cref{align-finitegrid}, and \Cref{align-only_xy} applies.
\end{proof}

%%%%%%%%%%%%%%%%%%%%%%%%%%%%%%%%%%%%%%%%%%%%%%%%%%%%%%%%%%%%%%%%%%%%%%%%%%%%
%%%%%%%%%%%%%%%%%%%%%%%%%%%%%%%%%%%%%%%%%%%%%%%%%%%%%%%%%%%%%%%%%%%%%%%%%%%%

\section{Two-way hamiltonian paths in transitive \texorpdfstring{$G$}{G}-graphs with \texorpdfstring{$G' \cong \Z_p$}{}}

In this section, we first prove $\cay(G;S)$ has a two-way hamiltonian path.
Throughout the remainder of the paper, $p$ always denotes a prime number.

%\begin{obs}\label{nottrivial_vol<->bouncing}
 %   Let $ G = \langle S \rangle $ be a group with a finite normal subgroup $ N $. 
  %  If two-way hamiltonian path $ \gamma $ aligned with sequence $ (e_{\ell})_{\ell\in \Z} $ in $\cay(G/N;SN) $, such that $ \vol(C_{\ell}) \notin N$ for every cycle $ C_{\ell}$ in $ \gamma $, then a(then every) lifiting of each $e_{\ell}$ is bouncing.   
%\end{obs}

\begin{lem}\label{S_not_intersect_G'}
Let $G=\langle S\rangle$ be a  group such that  $G'\cap S=\emptyset$ and $G'\cong \Z_{p}$.
%\todo{for some prime $p$? (throughout the section)}
%If $\cay(\g;\s)$ has a two-way hamiltonian path $\gamma$ aligned with a sequence $[e_{\ell}\mid \ell\in \Z]$ such that $\vol (C_{\ell})\neq 1 $  in $ G $ for every $\ell\in\Z$,  
Then $\cay(G;S)$ has a two-way hamiltonian path.
\end{lem}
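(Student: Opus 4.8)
The goal is to assemble the two-way hamiltonian path in $\cay(G;S)$ out of the local structure developed earlier. The plan is to proceed in three stages, combining \Cref{always_align}, \Cref{hc_in_blocks}, and the block-connecting argument sketched in \Cref{enter-label}. First, I would handle the degenerate case in which $G$ is abelian separately: if $G' = 1$ then $G$ is finitely generated abelian and infinite (since the statement concerns a two-way path, $G$ must be infinite), so \Cref{abelian_ham}(ii) gives the desired path directly. Hence assume $G$ is non-abelian, so there exist $x,y \in S$ with $[x,y] \neq 1$, and $G' \cong \Z_p$ is finite of prime order. Because $G'$ is finite while $G$ is infinite, the quotient $\g = G/G'$ is an infinite finitely generated abelian group.

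Next, I would invoke \Cref{always_align} to obtain a two-way hamiltonian path $\gamma$ in $\cay(\g;\s)$ together with an aligned sequence $(e_\ell)_{\ell \in \Z}$ of edges such that $\vol(C_\ell) \neq 1$ for every $\ell$. By \Cref{bounce-vol}, the condition $\vol(C_\ell) \neq 1$ is equivalent to $e_\ell$ being a bouncing edge, so every $e_\ell$ lifts to a bouncing edge in $\cay(G;S)$. This aligned path partitions the vertex set of $\cay(\g;\s)$ into consecutive segments and thereby defines the blocks $B(e_\ell)$ of $\cay(G;S)$ as in \Cref{blocks}. The hypothesis $G' \cap S = \emptyset$ guarantees that $\pi$ is genuinely a quotient by a nontrivial normal subgroup acting freely on vertices with no generator lying inside a fiber, so the lifting machinery of \Cref{lift_edge} and \Cref{|N|-liftings} applies cleanly and the blocks tile $V(\cay(G;S))$ fiber-by-fiber.

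Then I would apply \Cref{hc_in_blocks}: since each $e_\ell$ is a bouncing edge, every block $B(e_\ell)$ carries a hamiltonian cycle containing some bouncing edge, and by \Cref{blocks_have_HC} each block admits a hamiltonian path starting at any prescribed entrance vertex and ending at some exit vertex. The final stage is the stitching: one walks along the blocks in order, entering block $B(e_\ell)$ at a vertex of its entrance $\pi^{-1}(v_{i_\ell})$, traversing it via the hamiltonian path from \Cref{blocks_have_HC} to some exit vertex, and then crossing to the entrance of the next block along an edge of $\gamma$ lifted appropriately. Because the blocks partition $V(\cay(G;S))$ and each is covered exactly once, concatenating these segments over all $\ell \in \Z$ yields a two-way infinite list hitting every vertex exactly once with consecutive vertices adjacent, i.e.\ a two-way hamiltonian path.

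\textbf{Main obstacle.} The delicate point is the bookkeeping in the final stitching step: one must verify that the exit vertex reached in block $B(e_\ell)$ can always be matched to a valid entrance vertex of $B(e_{\ell+1})$ so that the traversal direction is consistent and no vertex is repeated or skipped across the block boundary. This is exactly where the freedom in \Cref{blocks_have_HC} to choose the starting entrance vertex is used, and where the alignment condition (that consecutive jumping edges share the structure $x_{\ell+1} = v_{j+k+1}$) ensures the blocks abut correctly. Confirming that this matching extends coherently to a single two-way infinite path, rather than breaking into finite pieces or a one-way ray, is the heart of the argument; the voltage/bouncing-edge condition is precisely what makes each block's internal hamiltonian path compatible with the global traversal.
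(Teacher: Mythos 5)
Your proposal follows essentially the same route as the paper's own proof: apply \Cref{always_align} to get an aligned two-way hamiltonian path in $\cay(\g;\s)$ with non-trivial voltages, convert the voltage condition into bouncing edges, invoke \Cref{hc_in_blocks} and \Cref{blocks_have_HC} to get hamiltonian paths through each block, and stitch consecutive blocks together via the matching between exits and entrances. The only difference is your (vacuous, since $G'\cong\Z_p$ is assumed non-trivial) abelian side case; otherwise the argument matches the paper's.
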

\begin{proof}
It follows from \cref{always_align} that $\cay(\g;\s)$ has a two-way hamiltonian path $\gamma$ aligned with a sequence $[e_{\ell}\mid \ell\in \Z]$ such that $\vol (C_{\ell})\neq 1 $  in $ G $.
Let $\gamma\colon \ldots,v_1,v_0,v_1,\ldots$ be a two-way hamiltonian path in $\cay(\g;\s)$ aligned with a sequence $[e_{\ell}\mid {\ell}\in \Z]$ such that $\vol (C_{\ell})\neq 1$ for each $\ell\in \Z$.
By \Cref{blocks_have_HC}, for every vertex at the entrance of a block $B$, there exists a hamiltonian path that starts at that vertex and ends at the exit of $B$. Additionally, there is a perfect matching between the entrance and the exit vertices of two consecutive blocks, allowing us to connect two hamiltonian paths in consecutive blocks, see \Cref{enter-label}.
\end{proof}

\begin{lem}\label{S_intersect_G'}
Let $G=\langle S\rangle$ be a group such that  $G'\cap S\neq\emptyset$ and $G'\cong \Z_p$. 
%If $\cay(\g;\s)$ has a two-way hamiltonian path $\gamma$,
Then $\cay(G;S)$ has a two-way hamiltonian path.
%If $\cay(\g;\s)$ has a two-way hamiltonian path $\gamma$ aligned with a sequence $[e_{\ell}\mid \ell\in \Z]$\todo{do we need the aligned sequence?} such that $\vol (C_{\ell})\neq 1 $  in $ G $ for every $\ell\in\Z$,  then $\cay(G;S)$ has a two-way hamiltonian path.
\end{lem}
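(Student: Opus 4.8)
We are in the case where $G = \langle S \rangle$, $G' \cong \Z_p$, and $G' \cap S \neq \emptyset$. This is the complementary case to Lemma~\ref{S_not_intersect_G'}, where we assumed $G' \cap S = \emptyset$. The difficulty is that now some generator $s_0 \in S$ actually lies inside the commutator subgroup $G'$. Since $G' \cong \Z_p$ is the cyclic group of prime order $p$, any nontrivial element of $G'$ generates all of $G'$, so in fact $\langle s_0 \rangle = G'$. The plan is to reduce this case to the already-established Lemma~\ref{S_not_intersect_G'} by removing such generators from the generating set and recovering them through the structure of the blocks.

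**The reduction strategy.** First I would set $T = S \setminus G'$, i.e.\ throw away every generator lying in the commutator subgroup, and consider the subgroup $\langle T \rangle$. The key observations to establish are: (i) since $G$ is non-abelian with $G' \cong \Z_p$, there exist $x, y \in S$ with $[x,y] \neq 1$, and neither $x$ nor $y$ can lie in $G'$ (as $G'$ is abelian, being cyclic), so $T$ still contains a non-commuting pair and in particular $\langle T \rangle$ is non-abelian; (ii) consequently $\langle T \rangle' \leq G'$ is nontrivial, hence $\langle T \rangle' = G'$ because $G'$ has prime order and has no proper nontrivial subgroups. This gives $\langle T \rangle \supseteq \langle T \rangle' = G'$, and since $\langle T \rangle \supseteq T$ we get $\langle T \rangle \supseteq \langle T \cup G' \rangle = \langle S \rangle = G$. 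Therefore $\langle T \rangle = G$, so $T$ is \emph{also} a generating set of $G$, and now $T \cap G' = \emptyset$ by construction. I would then apply Lemma~\ref{S_not_intersect_G'} to $\cay(G;T)$ to obtain a two-way hamiltonian path $P$ using only edges labelled by $T$.

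**Transferring back to $\cay(G;S)$.** The crucial point is that $\cay(G;T)$ is a \emph{spanning subgraph} of $\cay(G;S)$: since $T \subseteq S$, every edge of $\cay(G;T)$ is an edge of $\cay(G;S)$, and both graphs have the same vertex set $G$. A two-way hamiltonian path is by definition a two-way-infinite list of all the vertices with consecutive vertices adjacent; adding more generators (the elements of $S \cap G'$) only adds edges and cannot destroy existing adjacencies. Hence the path $P$ found in $\cay(G;T)$ is verbatim a two-way hamiltonian path in $\cay(G;S)$, and we are done.

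**The main obstacle.** The genuinely load-bearing step is the group-theoretic claim that $T = S \setminus G'$ still generates $G$, which rests entirely on the primality of $p$: it is because $G' \cong \Z_p$ has \emph{no} nontrivial proper subgroups that the nontrivial subgroup $\langle T \rangle' \leq G'$ must equal all of $G'$. If $G'$ were an arbitrary finite group this recovery would fail, so this is exactly where the hypothesis $G' \cong \Z_p$ is used. One point I would double-check carefully is that $T$ is nonempty and that a non-commuting pair survives in $T$; this follows because a commutator $[x,y]$ can be nontrivial only when $x,y \notin G'$ (an element of the abelian group $G'$ commutes with... — actually more care is needed here, since an element of $G'$ need not commute with elements outside $G'$). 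The cleaner argument is simply: if every element of $S$ outside $G'$ commuted with every other, then $\langle T \rangle$ would be abelian, forcing $\langle T\rangle' = 1$; combined with step (ii) this would contradict $\langle T \rangle' = G' \neq 1$. So the existence of the non-commuting pair in $T$ is subsumed by the generation argument itself, and I would present it in that order to avoid the subtlety.
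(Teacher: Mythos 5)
There is a genuine gap: your reduction hinges on the claim that $T = S \setminus G'$ still generates $G$, and this claim is false. First, it already contradicts the paper's standing convention that $S$ is \emph{minimal}: since $S \cap G' \neq \emptyset$, the set $T$ is a proper (symmetric) subset of $S$ and therefore cannot generate $G$. More importantly, the group-theoretic argument does not survive even without that convention. Your step (i) asserts that a non-commuting pair $x,y \in S$ must lie outside $G'$; as you yourself note mid-proof, an element of the abelian group $G'$ need not commute with elements \emph{outside} $G'$, so this is unjustified. The ``cleaner argument'' you substitute is circular: you invoke $\langle T\rangle' = G'$ (your step (ii)) to conclude that $\langle T \rangle$ is non-abelian, but step (ii) was itself deduced from the non-abelianness of $\langle T \rangle$. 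A concrete counterexample is $G = \langle c, t \mid c^3 = 1,\ tct^{-1} = c^2 \rangle$, the semidirect product of $\Z_3$ by $\Z$, with $S = \{t^{\pm 1}, c^{\pm 1}\}$: here $[t,c]=c$, so $G' = \langle c \rangle \cong \Z_3$ and $S \cap G' = \{c^{\pm 1}\}$, while $T = \{t^{\pm 1}\}$ generates only the infinite cyclic subgroup $\langle t \rangle$. Thus $\cay(G;T)$ is a disconnected spanning subgraph of $\cay(G;S)$ and carries no hamiltonian path to transfer.

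The paper proceeds in the opposite direction: rather than discarding the generators lying in $G'$, it sets $K = \langle S \cap G' \rangle$ (which equals $G'$ since $p$ is prime, and is normal), applies \cref{S_not_intersect_G'} to the quotient Cayley graph $\cay(G/K; SK)$ to get a two-way hamiltonian path there, observes that each fibre $\pi^{-1}(u_i)$ induces a copy of the finite abelian Cayley graph $\cay(K; S \cap G')$ and hence carries a hamiltonian cycle, and then spirals through the fibres: traverse the cycle in one fibre minus an edge, cross to the next fibre along a matching edge, and repeat. In the counterexample above this is exactly the construction that succeeds, whereas your reduction leaves nothing to which \cref{S_not_intersect_G'} can be applied.
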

%\todo[color=cyan]{BM: No, we do not need it, it can be deleted.}
\begin{proof}
First, let's define $K\coloneqq \langle G'\cap S\rangle $. 
Since $ G' $ is cyclic, $ K $ becomes a characteristic subgroup of $ G' $, thus $ K \unlhd G $. As $ G/K \cap SK = \emptyset $, according to \Cref{S_not_intersect_G'}, there exists a two-way hamiltonian path $ \overline{\gamma} \colon {\ldots,u_{-1},u_{0},u_{1},\ldots} $ in $ \cay(G/K;SK) $. 
We note that $\cay(K;G'\cap S)$ has a hamiltonian cycle, as shown in \Cref{abelian_ham} (I).
Consequently, there exists a hamiltonian cycle $C_i\colon v_1^i,\ldots,v_m^i$ in each $\pi^{-1}(u_i)$. 
A clockwise orientation is fixed in each $C_i$. Let $p_i$ represent the hamiltonian path starting with $v^i_1$ and moving along $C_i$ clockwise. 
There is a  perfect matching between $\pi^{-1}(u_i)$ and $\pi^{-1}(u_{i+1})$. Subsequently, an edge $e_i$ connecting $v^i_n$ to a vertex $v^i_{\ell}$ in $\pi^{-1}(u_{i+1})$ is used. Next, we find the hamiltonian path $p_{i+1}$ along the cycle $C_{i+1}$ clockwise. This procedure is repeated for each $i' \in \mathbb{Z}$, as depicted in \Cref{spiral}. It can be seen that $\{ p_i\mid i\in\mathbb{Z}\}\bigcup \{e_i\mid i\in \mathbb{Z}\}$ constitutes a two-way hamiltonian path.
\end{proof}
\begin{figure}[H]
    \centering
    \includegraphics[scale=0.6]{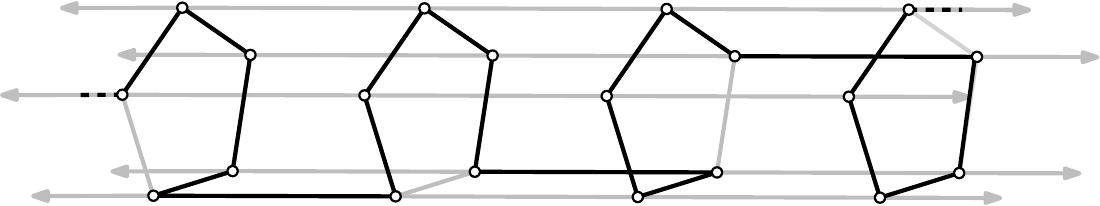}
    \caption{An illustration of a two-way hamiltonian path in \cref{S_intersect_G'} for the case  $p = 5$.}
    \label{spiral}
\end{figure}

%\begin{thm}\label{main_1}
%Let $G=\langle S\rangle$ be a group such that $ G' \cong \mathbb{Z}_p $.
%If $\cay(\g;\s)$ has a two-way hamiltonian path $\gamma$ aligned with a sequence $[e_{\ell}\mid \ell\in \Z]$ such that $\vol (C_{\ell})\neq 1 $  in $ G $ for every $\ell\in\Z$,  then $\cay(G;S)$ has a two-way hamiltonian path.
%\end{thm}

%In the preceding section, we showed that $ \cay(\g; \s) $ always has a two-way hamiltonian path  $\gamma$, aligned with the sequence  $(e_{\ell}){\ell \in \mathbb{Z}}$, such that $ \vol(C_{\ell}) \neq 1 $ for every  $\ell \in \mathbb{Z}$  (see \cref{always_align}). Combining this with \cref{main_1} yields the following result:

\maintwo*

\begin{proof}
The proof follows from \Cref{S_not_intersect_G'} and \Cref{S_intersect_G'}.
\end{proof}

Next, we extend \cref{main_2} to transitive graphs whose automorphism group has a finite cyclic group of order $p$ as its commutator subgroup. However, we first need to review a few preliminary results.

\begin{lem}[Sabidussi's Theorem {\cite[Theorem 1.2.20]{dobson2022symmetry}}]\label{sabidussi}
Let $X$ be a transitive $G$-graph.
There is a subset~$S$ of~$G$, such that $X$ is isomorphic to the Cayley graph\/ $\cay(G; S)$ if and only if\/ $\mathsf{Aut}(X)$ contains a subgroup that is isomorphic to~$G$ and acts sharply transitively on $V(X)$.
\end{lem}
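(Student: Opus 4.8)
The plan is to establish both directions of the biconditional by the classical coordinatization argument: left translation supplies the regular subgroup for the ``only if'' direction, and choosing a base vertex to identify $V(X)$ with $G$ recovers the generating set in the ``if'' direction. The real content is merely the bookkeeping that translates between the abstract group, its action on $V(X)$, and the edge relation.

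First I would handle the ``only if'' direction. Assuming $X \cong \cay(G;S)$, I identify $V(X)$ with $G$ and declare $g$ adjacent to $gs$ for each $s \in S$. For every $g \in G$, left translation $L_g \colon x \mapsto gx$ is an automorphism of $\cay(G;S)$: it sends an edge $\{x, xs\}$ to $\{gx, gxs\}$, and since adjacency depends only on $x^{-1}y$, which left translation preserves, one has $L_g \in \mathsf{Aut}(X)$. The assignment $g \mapsto L_g$ is a homomorphism, and it is injective because the action is faithful (if $L_g = \mathrm{id}$ then $g = g\cdot 1 = 1$). Its image acts sharply transitively, since $yx^{-1}$ is the unique element carrying $x$ to $y$. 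This exhibits the required subgroup of $\mathsf{Aut}(X)$ isomorphic to $G$.

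For the converse, suppose $\mathsf{Aut}(X)$ contains a subgroup $H \cong G$ acting sharply transitively on $V(X)$, and fix a base vertex $v_0$. Sharp transitivity yields, for each vertex $w$, a unique $h_w \in H$ with $h_w(v_0) = w$, so $w \mapsto h_w$ is a bijection $V(X) \to H$, which composes with a fixed isomorphism $\varphi\colon G \to H$ to a bijection $V(X) \to G$. Setting $S \coloneqq \{ h \in H : v_0 \text{ is adjacent to } h(v_0)\}$ (transported to $G$ via $\varphi$), I would verify that $w_1$ is adjacent to $w_2$ in $X$ if and only if $h_{w_1}^{-1} h_{w_2} \in S$: applying the automorphism $h_{w_1}^{-1}$ to the pair $\{w_1, w_2\}$ reduces adjacency of $w_1, w_2$ to adjacency of $v_0$ with $h_{w_1}^{-1} h_{w_2}(v_0)$, which is precisely the condition defining $S$. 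Hence the bijection is a graph isomorphism $X \cong \cay(G;S)$, and symmetry of $S$ (needed for an undirected Cayley graph) follows from symmetry of the adjacency relation.

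I do not expect a genuine obstacle here; the argument is standard. The only points requiring care are confirming that left translations preserve adjacency rather than merely permuting vertices, and checking that the set $S$ extracted in the reverse direction is well-defined and symmetric. Since the paper uses Sabidussi's theorem only as a device to pass between transitive $G$-graphs and Cayley graphs, I would keep the verification at this level rather than spelling out the routine checks.
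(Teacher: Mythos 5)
Your proposal is correct and is the standard coordinatization proof of Sabidussi's theorem; the paper itself gives no proof of this lemma, importing it directly from the cited reference, and your argument matches the classical one found there (left translations give the regular subgroup; a base vertex and the set $S=\{h: v_0 \sim h(v_0)\}$ recover the Cayley structure). The only routine checks you leave implicit — that $1\notin S$ because $X$ has no loops, and that $S$ generates $G$ exactly when $X$ is connected — are harmless here.
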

In particular, if $G$ acts on a graph $X$ transitively such that $\st_G(x)$ is trivial for some $x\in V(X)$, then $X$ is (isomorphic to) a Cayley graph on $G$.
We note that $G/G'$ is abelian which implies that $G'\st(v)/G'$ is a normal subgroup of $G/G'$ for every $v\in V(X)$.
So $G'\st(v)$ is a normal subgroup of $G$ for every $v\in V(X)$ and we have the following corollary:
\begin{cor}\label{quotient}
Let $X$ be a transitive $G$-graph. 
Then $X/G'$ is a Cayley graph of $G/(G'\st_G(v))$.
\end{cor}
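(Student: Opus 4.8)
The plan is to apply Sabidussi's Theorem (\Cref{sabidussi}) to the quotient graph $X/G'$ with the group $G/(G'\st_G(v))$, so the crux is to exhibit a subgroup of $\aut(X/G')$ isomorphic to $G/(G'\st_G(v))$ that acts sharply transitively on $V(X/G')$. Note that, by the third isomorphism theorem, this group is exactly $\g$ modulo the image $(G'\st_G(v))/G'$ of $G'\st_G(v)$ in $\g$.

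First I would record the standing facts. Since $G'$ is normal in $G$, the Observation following the definition of the quotient graph gives that $X/G'$ is vertex-transitive, with $G/G'$ acting transitively on $V(X/G')$ by automorphisms; in particular $G$ itself acts on $X/G'$ through the projection $G \to \g$, and $G'$ lies in the kernel of this action. The text preceding the corollary has already established that $G'\st(v)$ is a normal subgroup of $G$ for every $v \in V(X)$.

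Next I would compute the vertex stabilizers and the kernel of the $G$-action on $X/G'$. A vertex of $X/G'$ is a coset $G'v$, and an element $g \in G$ fixes $G'v$ precisely when $gv \in G'v$, i.e.\ when $h^{-1}g \in \st_G(v)$ for some $h \in G'$, i.e.\ when $g \in G'\st_G(v)$. Because $G'$ is normal and $\st_G(gv) = g\,\st_G(v)\,g^{-1}$, one checks that $G'\st_G(v)$ is conjugation-invariant and hence independent of the chosen vertex $v$; combined with the previous computation this shows that the stabilizer of \emph{every} vertex of $X/G'$ equals the single subgroup $G'\st_G(v)$, which is therefore also the full kernel of the $G$-action on $X/G'$. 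Consequently the induced action of $G/(G'\st_G(v))$ on $X/G'$ is faithful and has trivial vertex stabilizers, while transitivity is inherited from the transitivity of $G$ on $X$.

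Finally, having an action of $G/(G'\st_G(v))$ on $X/G'$ by automorphisms that is transitive and free, I would invoke the ``in particular'' consequence of \Cref{sabidussi} to conclude that $X/G'$ is isomorphic to a Cayley graph of $G/(G'\st_G(v))$. I expect the only genuinely delicate point to be verifying that $G'\st_G(v)$ does not depend on $v$ — equivalently, that the vertex stabilizers in the quotient action all collapse to the trivial subgroup — and this is exactly where the normality of $G'\st_G(v)$, already supplied in the text, does the essential work.
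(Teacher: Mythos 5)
Your proposal is correct and follows essentially the same route as the paper: the corollary there is derived exactly from the preceding remarks, namely that $G'\st_G(v)$ is normal in $G$ (because $G/G'$ is abelian), so the $G$-action on $X/G'$ has all vertex stabilizers equal to $G'\st_G(v)$, and the induced sharply transitive action of $G/(G'\st_G(v))$ makes $X/G'$ a Cayley graph via Sabidussi's Theorem. Your write-up just makes explicit the stabilizer computation that the paper leaves implicit.
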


%\begin{lem}[{\cite[Corollary 2.4]{transitive-ham}}]\label{Hst}
%Let $X$ be a transitive $G$-graph. If $H$ is a normal subgroup of $G$, then the following are equivalent:
%\begin{enumerate}[\rm (i)]
%    \item $H\st_G(x)$, is a normal subgroup of $G$ for some $x \in V(X)$;
%    \item $H\st_G(x)$, is a normal subgroup of $G$ for every $x \in V(X)$;
%    \item $H\st_G(x)=H\st_G(y)$, for all $x, y \in V(X)$.    
%\end{enumerate} 
%\end{lem} 

\begin{lem}\label{cor}
Let $X$ be a $G$-graph and $v\in V(X)$.
If $G$ acts on $X$ transitively but not sharply. 
There exists a neighbor $u$ of $v$ such that $\st_G(v)\neq \st_G(u)$.\label{adjacent} 
\end{lem}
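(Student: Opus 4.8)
The plan is to argue by contradiction. Suppose that $\st_G(v) = \st_G(u)$ for \emph{every} neighbor $u$ of $v$, and set $H \coloneqq \st_G(v)$. The crucial observation is that the equality $\st_G(v) = \st_G(u)$ means exactly that every element of $H$ fixes $u$; hence $H$ fixes $v$ together with all of its neighbors. I would then show that this forces $H$ to fix \emph{every} vertex of $X$, so that $H$ lies in the kernel of the action. Since the action is assumed faithful, this gives $H = 1$. But ``transitive yet not sharply transitive'' means precisely that the vertex stabilizers are non-trivial, i.e.\ $\st_G(v) \neq 1$, which is the desired contradiction.

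To propagate the statement ``$H$ fixes everything'', I would work with the normalizer $N \coloneqq N_G(H)$. For each neighbor $u$ of $v$, transitivity provides $g \in G$ with $gv = u$, and then $\st_G(u) = g\,\st_G(v)\,g^{-1} = gHg^{-1}$; the standing assumption $\st_G(u) = H$ yields $gHg^{-1} = H$, i.e.\ $g \in N$. Thus every neighbor of $v$ lies in the orbit $Nv$.

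The main step is to show that $Nv$ is closed under adjacency. If $w = nv \in Nv$ with $n \in N$ and $w'$ is a neighbor of $w$, then $n^{-1}w'$ is a neighbor of $n^{-1}w = v$, so by the previous paragraph $n^{-1}w' = g v$ for some $g \in N$, giving $w' = (ng)\,v \in Nv$. Since $X$ is connected and $v \in Nv$, this forces $Nv = V(X)$, so $N$ acts transitively. In fact this also gives $N = G$: for any $g \in G$ we may write $gv = nv$ with $n \in N$, whence $n^{-1}g \in \st_G(v) = H \leq N$ and so $g \in N$. Therefore $H \unlhd G$.

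Finally, for an arbitrary $w \in V(X)$ write $w = nv$ with $n \in N$; then $\st_G(w) = nHn^{-1} = H$, so $H$ fixes $w$. Hence $H$ fixes every vertex. I can conclude either directly from faithfulness that $H = 1$, or, since we have shown $H \unlhd G$ with $H \leq \st_G(v)$, invoke \Cref{Gcapst=1} (which says $\st_G(v)$ contains no non-trivial normal subgroup of $G$) to get $H = 1$. Both contradict $\st_G(v) \neq 1$. The only genuinely delicate point is the propagation step closing $Nv$ under adjacency, which is exactly where connectedness of $X$ is used; the remainder is routine conjugation bookkeeping with stabilizers.
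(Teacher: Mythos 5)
Your proof is correct, and it rests on the same underlying mechanism as the paper's: if stabilizers do not change along edges, then connectivity forces the vertex stabilizer into the kernel of the action, which contradicts faithfulness combined with the non-sharpness of the transitive action. The organization differs in where the contradiction hypothesis is placed. The paper negates the conclusion globally --- it assumes every vertex has the same stabilizer as all of its neighbours, so a non-trivial stabilizer element immediately fixes the whole (connected) graph --- and then invokes transitivity at the very end to transfer the existence of a suitable neighbour to the given vertex $v$. You negate the conclusion only at $v$, which is why you need the extra normalizer bookkeeping: showing that the $N_G(H)$-orbit of $v$ is closed under adjacency, hence equals $V(X)$, hence $N_G(H)=G$ and $H\unlhd G$. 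That extra work buys you something the paper's terse argument does not state explicitly, namely the normality of $H=\st_G(v)$, which lets you close the argument either by faithfulness or by appealing to Lemma~\ref{Gcapst=1}; it also makes explicit exactly where connectivity of $X$ is used, a hypothesis the paper's proof relies on but leaves implicit.
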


\begin{proof}
If the stabilizer of each neighbor $ y $ of a vertex $ x $ is equal to the stabilizer of $ x $, then there exists an automorphism that fixes $ X $, implying that the action is not faithful. Therefore, there must be a vertex $ x $ with a neighbor $ y $ such that $ \st_G(x) \neq \st_G(y) $.
Since $ G $ is transitive, we conclude that there exists a neighbor $ u $ of $ v $ such that $ \st_G(v) \neq \st_G(u) $.
\end{proof}

\mainone*

\begin{proof}
If the stabilizer of a vertex of $X$ is trivial, then $X$ is a Cayley graph and by \cref{main_2}, we are done.
So we can assume that there is a vertex with non-trivial stablizer.
It follows from \cref{quotient} that $X/G'$ is a Cayley graph of $G/(G'\st_G(v))$.
We notice that $G/(G'\st_G(v))$ is an abelian group.
There is  a two-way hamiltonian  path $\overline{\gamma}$ aligned with the sequence  $[e_{\ell}\mid \ell\in \Z]$.
Let $G'=\{g_1,\ldots,g_n\}$ and $g_1=1$.
We set  two-way paths $L(\overline{\gamma})\coloneqq \{\gamma_1,\ldots,\gamma_{n}\}$ of liftings of $\overline{\gamma}$,
where $\gamma_i$ is 
\[ \ldots, g_iv_{-2}, g_iv_{-1}, g_iv_0, g_iv_{1}, g_iv_{2}, \ldots\]
We note that by \Cref{|N|-liftings} we have $V(X)=\sqcup_{i=1}^p V(R_i)$.
We claim the following
\begin{clm}
If $\pi((u,v))=e_{\ell}$, then there is a bouncing edge in $\pi^{-1}(e_{\ell})$.
\end{clm}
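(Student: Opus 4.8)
The plan is to reduce the existence of a bouncing edge over $e_\ell$ to a statement about vertex stabilizers, and then to supply that statement using \Cref{cor}. Two structural facts set this up. First, since $G'\cong\Z_p$ has no proper non-trivial subgroup, \Cref{Gcapst=1} gives $G'\cap\st_G(w)=1$ for every $w\in V(X)$, so $G'$ acts freely on $V(X)$ and every fibre $\pi^{-1}(\overline w)=\{g_1w,\dots,g_pw\}$ has exactly $p$ vertices; consequently a lifting of $e_\ell=(\overline u,\overline v)$ based at $(u^*,v^*)$ is bouncing precisely when its two endpoints carry different $G'$-indices. Second, since $G'\st_G(w)\unlhd G$ stabilizes the vertex $\overline w$ of the $G$-transitive graph $X/G'$, it is contained in the kernel of the $G$-action on $X/G'$; in particular every stabilizer $\st_G(w)\subseteq G'\st_G(w)$ fixes each vertex of $X/G'$ and hence permutes each fibre $\pi^{-1}(\overline u)$.

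The core step is the following reduction: if $e_\ell$ has a lifting $(u^*,v^*)$ with $\st_G(u^*)\neq\st_G(v^*)$, then $\pi^{-1}(e_\ell)$ contains a bouncing edge. After possibly swapping the two endpoints, choose $t\in\st_G(v^*)\setminus\st_G(u^*)$. Since $t$ fixes $v^*$ and acts trivially on $X/G'$, the vertex $tu^*$ lies in $\pi^{-1}(\overline u)$ and $tu^*\neq u^*$, so $tu^*=g\,u^*$ for some $g\in G'\setminus\{1\}$. Thus $(v^*,tu^*)$ is a lifting of $e_\ell$ whose endpoints carry different $G'$-indices, i.e.\ a bouncing edge. (Equivalently, if no lifting were bouncing then $u^*$ would be the unique neighbour of $v^*$ in $\pi^{-1}(\overline u)$, forcing $\st_G(v^*)$ to fix $u^*$ and, symmetrically, $\st_G(u^*)$ to fix $v^*$, against $\st_G(u^*)\neq\st_G(v^*)$.)

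It therefore suffices to arrange that each jumping edge $e_\ell$ admits a lifting with unequal endpoint stabilizers. For this I invoke \Cref{cor}: as the action is transitive but not sharply transitive, some edge $(x,y)$ of $X$ has $\st_G(x)\neq\st_G(y)$. Its image $\pi((x,y))$ is an edge of the Cayley graph $X/G'$ of the abelian group $A=G/(G'\st_G(v))$. Because $\pi$ is $G$-equivariant and conjugation carries stabilizers to stabilizers, every edge in the $A$-translate orbit of $\pi((x,y))$---equivalently, every edge of $X/G'$ carrying the same generator label---again lifts to an edge of $X$ with unequal endpoint stabilizers. Hence, once the two-way hamiltonian path $\overline\gamma$ and its aligned sequence are chosen so that every jumping edge $e_\ell$ carries this distinguished label, the reduction above produces a bouncing edge in each $\pi^{-1}(e_\ell)$, proving the claim (and $\vol(C_\ell)\neq1$ by \Cref{bounce-vol}).

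The step I expect to be the main obstacle is exactly this last compatibility requirement: running the spanning-grid and hamiltonian-path construction of \Cref{long} inside $X/G'$ so that the distinguished generator handed to us by \Cref{cor} appears as the label of every jumping edge. This is immediate when that generator can be taken as the jumping direction of a spanning grid, but in general it needs a short case analysis according to whether the cyclic subgroup it generates in $A$ is finite or infinite, and which of the roles in the grids of \Cref{spanning-double} it can play; the remaining verifications are only bookkeeping about $G'$-indices.
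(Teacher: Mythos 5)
Your core step is sound and is in fact the same mechanism the paper uses: since $\st_G(v^*)\subseteq G'\st_G(v^*)$, which is normal and therefore contained in the kernel of the action on $X/G'$, any $t\in\st_G(v^*)\setminus\st_G(u^*)$ moves $u^*$ within its fibre, and freeness of the $G'$-action on $V(X)$ (from \Cref{Gcapst=1} together with $G'\cong\Z_p$) makes $(v^*,tu^*)$ a bouncing lifting of $e_\ell$. This is exactly the role played by the element $\zeta$ in the paper's proof, and your contrapositive formulation (a non-bouncing fibre pattern forces equal stabilizers across the edge) is a clean way to package it.

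The gap is the one you flag yourself, and it is genuine. From \Cref{cor} you obtain only a single edge $(x,y)$ of $X$ with $\st_G(x)\neq\st_G(y)$; you propagate this by translation to all edges of $X/G'$ carrying the label of $\pi((x,y))$; and you then need the two-way hamiltonian path and its aligned sequence to be chosen so that \emph{every} jumping edge carries that one label. Nothing in \Cref{long} delivers this: the aligned sequences there are built inside the abelian quotient solely to make the voltages of the cycles $C_\ell$ non-trivial, the labels of the jumping edges are dictated by the choice of non-commuting generators and by which of $\langle\x\rangle$, $\langle\x,\y\rangle$ is finite or infinite, and there is no recorded freedom to additionally prescribe the jumping-edge label to be a generator handed to you afterwards by \Cref{cor} (in the transitive setting that label is a coset in $G/(G'\st_G(v))$ which need not coincide with the $\x$ or $\y$ driving the grid construction). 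So as written the argument does not close. The paper routes around this differently: it applies \Cref{cor} directly to the endpoints of the given lifting of $e_\ell$ to assert $\st_G(u)\neq\st_G(v)$ for that very edge, so no compatibility between the distinguished edge and the construction of $\overline{\gamma}$ is required. What your reduction actually needs is that every $e_\ell$ in the aligned sequence admits a lifting with distinct endpoint stabilizers; the purely existential statement of \Cref{cor} does not supply this, and your proposed fix of re-running the constructions of \Cref{long} with a prescribed jumping label is left entirely unexecuted.
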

\begin{proof}
Let $(G'[u], G'[v])=e_{\ell}$ be an edge such that the lifting edge $(u,v)$ is not bouncing.
By \cref{cor}, there is a neighbor $v$ of $u$ such that $\st_G(u)\neq\st_G(v)$ and so there is $\zeta\in G$ such that $\zeta(u)=u$, but $\zeta(v)\neq v$.
There exists $\alpha\in G$ such that $\alpha u=v$ and so $G'\st_G(v)=(G'\st_G(u))^{\alpha}$.
As we mentioned before  $G'\st(u)$ is a normal subgroup of $G$.
Thus $G'\st_G(v)=G'\st_G(u)$ which implies that $\zeta(v)\in G'[v]$.
We note that $u$ and $v$ are adjacent and so  $\zeta(u)=u$ and $\zeta(v)$ are adjacent.
We are assuming that $(u,v)$ is not bouncing. Since $\zeta(v)\neq v$ and $\zeta(v)\in G'[v]$, we conclude that $(u,\zeta(v))$ is a bouncing edge and $\pi(u,\zeta(v))=e_{\ell}$.
\end{proof}

\noindent Since for each $e_{\ell}$ for $\ell \in \Z$, there is a bouncing edge in $\pi^{-1}(e_{\ell})$, we are able to setup our blocks in $X$, as described in \Cref{blocks}.
By \Cref{blocks_have_HC}, for every vertex at the entrance of a block $B$, there exists a hamiltonian path that starts at that vertex and ends at the exit of $B$. Additionally, there is a perfect matching between the entrance and the exit vertices of two consecutive blocks, allowing us to connect two hamiltonian paths in consecutive blocks.
\end{proof}

%%%%%%%%%%%%%%%%%%%%%%%%%%%%%%%%%%%%%%%%%%%%%%%%%%%%%%%%%%%%%%%%%%%%%%%%%%%%
%%%%%%%%%%%%%%%%%%%%%%%%%%%%%%%%%%%%%%%%%%%%%%%%%%%%%%%%%%%%%%%%%%%%%%%%%%%%

\section{Hamiltonian circles}\label{circles}

Diestel and Kühn (cf.~\cite{Diestel-CycleSpace}) have introduced a topological point of view that suggests a different analogue of a hamiltonian cycle for infinite graphs that are locally finite. The inspiration behind this approach is as follows:

\noindent In a finite graph, a hamiltonian cycle corresponds to a copy of the circle $S^1$ in $X$, which includes all the vertices of $X$.
In the case of an infinite graph $X$, adding the point at infinity to a two-way-infinite hamiltonian path results in a copy of $S^1$ in the one-point compactification of $X$. This compactification contains all the vertices of $X$.
The new approach acknowledges that circles in a different compactification, specifically the Freudenthal compactification, can be more interesting in the context of infinite graphs.

\noindent If an infinite graph has only one end, then its Freudenthal compactification is the same as its one-point compactification. In such cases, it is easy to observe that every hamiltonian circle is a two-way-infinite hamiltonian path.

\begin{cor}
Let $\cay(G;S)$ be a one-ended graph, where $G'\cong\Z_p$.
Then $\cay(G;S)$ has a hamiltonian circle.
\end{cor}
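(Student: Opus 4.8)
The plan is to show that the one-ended hypothesis collapses the Freudenthal compactification to the one-point compactification, so that the hamiltonian circle we seek is precisely a two-way hamiltonian path together with the single end as its closing point. Concretely, I would first invoke \Cref{main_2}, which guarantees that $\cay(G;S)$ has a two-way hamiltonian path $\gamma\colon \ldots, x_{-1}, x_0, x_1, \ldots$, since $G' \cong \Z_p$ by hypothesis. The entire content of the corollary is then to explain why this path, viewed in the appropriate topological space, is a hamiltonian circle.

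Next I would make the topological identification explicit. A hamiltonian circle is a homeomorphic copy of $S^1$ in the Freudenthal compactification $|X|$ of $X$ that passes through every vertex. When $X$ is one-ended, its Freudenthal compactification adds exactly one point $\omega$ at infinity, and hence coincides with the one-point compactification. The two-way hamiltonian path $\gamma$ is a copy of the real line $\mathbb R$ in $X$ containing all vertices. Both of its tails, namely the rays $x_0, x_1, x_2, \ldots$ and $x_0, x_{-1}, x_{-2}, \ldots$, converge to the unique end $\omega$ in $|X|$, because any ray in a one-ended graph converges to that single end. Adjoining $\omega$ therefore joins the two ends of the line $\gamma$ at a single point, and the resulting set $\gamma \cup \{\omega\}$ is a continuous image of $S^1$ that visits every vertex of $X$.

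The main step to verify carefully is that $\gamma \cup \{\omega\}$ is genuinely homeomorphic to $S^1$, not merely a continuous bijective image. I would argue this by noting that $|X|$ is a compact Hausdorff space (being a compactification of a locally finite, hence $\sigma$-compact, graph), so any continuous bijection from the compact space $S^1$ onto its image is automatically a homeomorphism. The map sending the two infinite tails of $\gamma$ to $\omega$ and respecting the linear order otherwise is continuous precisely because both tails, being rays in the one-ended graph, have $\omega$ as their unique accumulation point in $|X|$; this is exactly the observation already stated in the excerpt that ``every hamiltonian circle is a two-way-infinite hamiltonian path'' run in reverse.

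I anticipate that the only delicate point is confirming that both tails of $\gamma$ really do converge to $\omega$ and that no tail ``escapes'' elsewhere, but this is immediate from local finiteness together with one-endedness: a locally finite connected graph with a single end has every ray converging to that end in the Freudenthal compactification. Granting this standard fact, the corollary follows with essentially no computation, the substance having already been supplied by \Cref{main_2}.
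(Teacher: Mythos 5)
Your proposal is correct and follows exactly the route the paper intends: invoke Theorem~\ref{main_2} to obtain a two-way hamiltonian path, then use the observation (stated just before the corollary) that for a one-ended locally finite graph the Freudenthal compactification is the one-point compactification, so the path closed up through the unique end is a hamiltonian circle. The extra topological detail you supply (both tails converge to the single end, and a continuous bijection from the compact $S^1$ into the Hausdorff space $|X|$ is a homeomorphism onto its image) is a correct and welcome elaboration of what the paper leaves implicit.
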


\noindent We close the paper with the following conjecture.
\begin{conj}
Let $G=\langle S\rangle$ be a two-ended group, where $G'\cong \Z_{p^n}$.
Then $\cay(G;S)$ has a hamiltonian circle.
\end{conj}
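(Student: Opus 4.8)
The plan is to follow the blueprint of \cref{main_2} while producing the structure that a hamiltonian circle forces in a two-ended graph. Because $\cay(G;S)$ is locally finite and two-ended, its Freudenthal compactification adds exactly two ends $\omega_1,\omega_2$, and a homeomorphic copy of $S^1$ meeting every vertex must pass through both ends exactly once; deleting the two ends from such a circle leaves two disjoint arcs, so a hamiltonian circle here is precisely a pair of vertex-disjoint, spanning double rays, one tail of each converging to $\omega_1$ and the other to $\omega_2$. Thus the target is \emph{two} parallel spanning double rays rather than the single two-way path of \cref{main_2}. First I would record the algebra: a two-ended group is virtually $\Z$, and since $G'$ is finite the quotient $\g=G/G'$ is two-ended and abelian, so $\g\cong\Z\oplus T$ with $T$ finite; a routine refinement of the abelian case (\cref{abelian_ham}) then yields a hamiltonian circle of $\cay(\g;\s)$, that is, two spanning double rays $\overline\gamma_1,\overline\gamma_2$ in the quotient, each joining the two ends.

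Next I would lift this quotient structure. As in \cref{|N|-liftings}, the fibres of $\pi\colon\cay(G;S)\to\cay(\g;\s)$ split $V(G)$ into $p^n$ disjoint lifts, and I would attach aligned sequences and blocks (\cref{blocks}) along $\overline\gamma_1$ and $\overline\gamma_2$ separately. The heart of the matter is the block lemma, and this is exactly where the prime-power hypothesis bites. In the $\Z_p$ case \cref{hc_in_blocks} only needs $\vol(C_\ell)\neq 1$, because every non-identity element of $\Z_p$ generates it, so the bouncing edges run through all $p$ cosets and sew the lifts of a block into a single hamiltonian cycle. For $G'\cong\Z_{p^n}$ the preimage of a cycle $C_\ell$ is a $\Z_{p^n}$-cyclic cover that closes up into one cycle only when $\vol(C_\ell)$ has order exactly $p^n$, that is, lies outside the Frattini subgroup $p\,\Z_{p^n}$; a merely non-trivial voltage leaves several parallel cycles and no single hamiltonian cycle of the block. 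Hence the decisive task is to strengthen \cref{always_align} so that the aligned sequence has voltages that are \emph{generators} of $\Z_{p^n}$. The voltages produced in \cref{align-fullgrid}, \cref{align-finitegrid} and \cref{align-only_xy} are conjugates of commutators such as $[r,x]$, or elements of the form $x^m y^n$, and I would try to show that (after possibly replacing a generator by a suitable power) these can be forced to have full order $p^n$.

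I expect this voltage-order control to be the main obstacle, and to manage it I would induct on $n$. The order-$p^{n-1}$ subgroup $H\le G'$ is characteristic in the cyclic group $G'$, hence $H\unlhd G$, with $H\cong\Z_{p^{n-1}}$ and $(G/H)'=G'/H\cong\Z_p$, while $G/H$ is again two-ended; the inductive hypothesis (base case $n=1$) supplies a hamiltonian circle of $\cay(G/H;SH)$, which I would then lift through the intermediate $\Z_{p^{n-1}}$-cover $G\to G/H$. Propagating block hamiltonicity up this cyclic tower, while keeping the two families of tails converging correctly to $\omega_1$ and $\omega_2$, is the difficulty that recurs at every level, since the Frattini obstruction reappears for each intermediate quotient. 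Finally, once every block carries a hamiltonian path from entrance to exit as in \cref{blocks_have_HC}, I would stitch consecutive blocks with perfect matchings as in \cref{main_2}, but arranged so that the lifted pieces close up into exactly two vertex-disjoint spanning double rays; the last point to verify is that the resulting point set is genuinely a circle in the Freudenthal compactification — two arcs meeting at the two ends, with no stray finite cycle and no double ray whose two tails collapse to a single end.
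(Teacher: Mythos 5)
The statement you are addressing is posed in the paper as an open \emph{conjecture}; the paper contains no proof of it (its only positive result about hamiltonian circles is the one-ended corollary, where a hamiltonian circle coincides with a two-way hamiltonian path). So there is nothing to compare your argument against, and the question is whether your proposal closes the gap on its own. It does not: it is a research plan whose decisive steps are announced rather than carried out. The central missing piece is the one you yourself flag, namely forcing every voltage $\vol(C_\ell)$ to have order exactly $p^n$ rather than merely being non-trivial. Nothing in \cref{align-fullgrid}, \cref{align-finitegrid} or \cref{align-only_xy} delivers this: the final contradiction in \cref{align-only_xy}, for example, only excludes the case where both candidate voltages $[y^{n-1},x]$ and $x^m y^n$ are trivial, and says nothing when both are non-trivial but lie in the Frattini subgroup $p\,\Z_{p^n}$, in which case the preimage of $C_\ell$ splits into several parallel cycles and \cref{hc_in_blocks} fails. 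Your proposed induction on $n$ does not circumvent this: after obtaining a hamiltonian circle of $\cay(G/H;SH)$ with $H\cong\Z_{p^{n-1}}$, lifting it through the cover $G\to G/H$ is a problem of exactly the same type, with the same order-of-voltage obstruction for the deck group $H$, so the induction has no engine.

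A second, independent gap concerns the target object itself. You correctly observe that in a two-ended locally finite graph a hamiltonian circle is a pair of vertex-disjoint spanning double rays, each with one tail in each end. But the block machinery of the paper (\cref{hc_in_blocks}, \cref{blocks_have_HC}, and the matching argument of \cref{S_not_intersect_G'}) is built to produce \emph{one} spanning double ray by concatenating one entrance-to-exit hamiltonian path per block. To produce two disjoint spanning double rays you need each block to decompose into \emph{two} vertex-disjoint paths with prescribed entrance and exit vertices whose union covers the block, and you need the matchings between consecutive blocks to respect this bipartition consistently out to both ends. That is a genuinely different lemma, not a corollary of \cref{blocks_have_HC}, and your proposal does not supply it; "arranged so that the lifted pieces close up into exactly two vertex-disjoint spanning double rays" is the conclusion restated, not an argument. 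Until both the voltage-order control and the two-path block decomposition are proved, the statement remains, as the paper says, a conjecture.
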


\noindent Also one can ask if $G'\cong \Z_{p^n}$, does $G=\langle S\rangle$ have a two-way hamiltonian path?

\section*{Acknowledgement}
We thank Dave Witte Morris for a discussion during the preparation of this paper.

\bibliographystyle{plainurlnat}
\bibliography{hc.bib}
\nocite{*}

\end{document}